\newtheorem{theor}{Theorem}
\newtheorem{lemma}[theor]{Lemma}
\newtheorem{defin}[theor]{Definition}
\newenvironment{proof}{\noindent{\scshape Proof.}}{\hspace{2mm} $\square$}
\newcommand{\Z}{\mathbb{Z}}
\newcommand{\R}{\mathbb{R}}
\newcommand{\D}{\mathbb{D}}
\newcommand{\ind}{\mathbf 1}
\newcommand{\ep}{\epsilon}
\DeclareMathOperator{\card}{card}
\DeclareMathOperator{\sign}{sign}
\DeclareMathOperator{\dist}{dist}
\begin{document}

\begin{frontmatter}

\title     {Stochastic spatial model of producer-consumer \\ systems on the lattice}
\runtitle  {Stochastic spatial model of producer-consumer systems on the lattice}
\author    {N. Lanchier\thanks{Research supported in part by NSF Grant DMS-10-05282.}}
\runauthor {N. Lanchier}
\address   {School of Mathematical and Statistical Sciences, \\ Arizona State University, \\ Tempe, AZ 85287, USA.}

\begin{abstract} \ \
 The objective of this paper is to give a rigorous analysis of a stochastic spatial model of producer-consumer systems that
 has been recently introduced by Kang and the author to understand the role of space in ecological communities in which
 individuals compete for resources.
 Each point of the square lattice is occupied by an individual which is characterized by one of two possible types, and
 updates its type in continuous time at rate one.
 Each individual being thought of as a producer and consumer of resources, the new type at each update is chosen at random
 from a certain interaction neighborhood according to probabilities proportional to the ability of the neighbors to consume
 the resource produced by the individual to be updated.
 In addition to giving a complete qualitative picture of the phase diagram of the spatial model, our results indicate
 that the nonspatial deterministic mean-field approximation of the stochastic process fails to describe the behavior of the
 system in the presence of local interactions.
 In particular, we prove that, in the parameter region where the nonspatial model displays bistability, there is a dominant
 type that wins regardless of its initial density in the spatial model, and that the inclusion of space also translates into
 a significant reduction of the parameter region where both types coexist.
\end{abstract}

\begin{keyword}[class=AMS]
\kwd[Primary ]{60K35, 91A22}
\end{keyword}

\begin{keyword}
\kwd{Interacting particle systems, voter model, Richardson model, threshold contact process.}
\end{keyword}

\end{frontmatter}


\section{Introduction}
\label{sec:intro}

\indent To understand the role of space in ecological communities in which individuals compete for resources, Kang and the
 author \cite{kang_lanchier_2012} recently introduced a stochastic spatial model of producer-consumer systems on the lattice.
 Based on their numerical simulations of this spatial model and their analytical results for its nonspatial deterministic
 mean-field approximation, they concluded that the inclusion of space drastically affects the outcome of such biological
 interactions.
 Their results for the models with two species are reminiscent of the ones found in \cite{neuhauser_pacala_1999} for the
 spatially explicit Lotka-Volterra model and in \cite{lanchier_neuhauser_2009} for a non Mendelian diploid model:
 in the parameter region where the nonspatial model displays bistability, there is a dominant type that wins regardless of
 its initial density in the spatial model, while the inclusion of space also translates into a significant reduction of
 the parameter region where both species coexist.
 In the presence of three species, other disagreements between the spatial and nonspatial models appear.
 The main purpose of this paper is to give rigorous proofs of some of the results stated in \cite{kang_lanchier_2012}
 for the stochastic spatial model with two species, which we now describe in detail.


\subsection*{Model description.}

\indent Each point of the $d$-dimensional square lattice is occupied by exactly one individual which is characterized by one of two
 possible types, say type 1 and type 2.
 Each individual is thought of as a producer and consumer of resources, and we described the dynamics based on four parameters
 $a_{ij} \geq 0$ that denote the ability of an individual of type $i$ to consume the resource produced by an individual of type $j$,
 that we call resource $j$.
 To avoid degenerate cases, we also assume that each resource can be consumed by at least one type: $a_{1j} + a_{2j} > 0$.
 Each individual dies at rate one, i.e., the type at each vertex is updated at rate one, with the new type being chosen at random
 from a certain neighborhood according to probabilities proportional to the ability of the neighbors to consume the resource
 produced by the individual to be updated.
 More precisely, the state space consists of all functions $\eta$ that map the square lattice into the set of types $\{1, 2 \}$ and
 the dynamics are described by the Markov generator
\begin{equation}
\label{eq:generator-1}
  Lf (\eta) \ = \ \sum_{x \in \Z^d} \ \sum_{i \neq j} \ \frac{a_{ij} \,f_i (x, \eta)}{a_{1j} \,f_1 (x, \eta) + a_{2j} \,f_2 (x, \eta)} \ \ [f (\eta_{x, i}) - f (\eta)]
\end{equation}
 where $f_i (x, \eta)$ denotes the number of type $i$ neighbors
 $$ f_i (x, \eta) \ = \ \card \,\{y \neq x : |y_1 - x_1| + \cdots + |y_d - x_d| \leq M \ \hbox{and} \ \eta (y) = i \} $$
 and where the configuration $\eta_{x, i}$ is obtained from the configuration $\eta$ by assigning type $i$ to vertex
 $x$ and leaving the type of all the other vertices unchanged.
 To avoid cumbersome notations, we will write later $x \sim y$ to indicate that vertices $x$ and $y$ are neighbors, i.e.,
 $$ y \,\neq \,x \quad \hbox{and} \quad |y_1 - x_1| + |y_2 - x_2| + \cdots + |y_d - x_d| \leq M. $$
 The positive integer $M$ is referred to as the range of the interactions.
 The fraction in \eqref{eq:generator-1} means that the conditional probability that the new type is chosen to be type $i$ given
 that vertex $x$ is of type $j$ at the time of an update is equal to the overall ability of the neighbors of type $i$ to consume
 resource $j$ divided by the overall ability of all the neighbors to consume resource $j$.
 Note that the assumption on the parameters, $a_{1j} + a_{2j} > 0$ for $j = 1, 2$, allows to define
 $$ a_1 \ := \ a_{11} \ (a_{11} + a_{21})^{-1} \in \,[0, 1] \qquad \hbox{and} \qquad a_2 \ := \ a_{22} \ (a_{12} + a_{22})^{-1} \in \,[0, 1] $$
 and to rewrite the Markov generator \eqref{eq:generator-1} in the form
\begin{equation}
\label{eq:generator-2}
  Lf (\eta) \ = \ \sum_{x \in \Z^d} \ \sum_{i \neq j} \ \frac{(1 - a_j) \,f_i (x, \eta)}{a_j \,f_j (x, \eta) + (1 - a_j) \,f_i (x, \eta)} \ \ [f (\eta_{x, i}) - f (\eta)]
\end{equation}
 indicating that our model reduces to a two-parameter system.
 Note also that if one of the original parameters in equation \eqref{eq:generator-1} is equal to zero then the denominator in
\begin{equation}
\label{eq:proba}
 \frac{a_{ij} \,f_i (x, \eta)}{a_{1j} \,f_1 (x, \eta) + a_{2j} \,f_2 (x, \eta)} \ = \
 \frac{(1 - a_j) \,f_i (x, \eta)}{a_j \,f_j (x, \eta) + (1 - a_j) \,f_i (x, \eta)} \qquad i \neq j
\end{equation}
 may be equal to zero, in which case the numerator is equal to zero as well.
 When this happens, we have the following alternative and define \eqref{eq:proba} as follows:
\begin{enumerate}
 \item $a_j = f_i = 0$. In this case, motivated by the fact that there is no neighbor of type $i$, we define the probability \eqref{eq:proba}
  at which vertex $x$ becomes type $i$ to be zero. \vspace{4pt}
 \item $a_j = f_i = 1$. In this case, motivated by the fact that there are only neighbors of type $i$, we define the probability \eqref{eq:proba}
  at which vertex $x$ becomes type $i$ to be one.
\end{enumerate}
 We point out that, in addition to be the most natural from a biological point of view, the two assumptions above also make the
 transition probability \eqref{eq:proba} continuous with respect to the parameters, which is a key to some of our proofs.


\subsection*{The mean-field approximation.}

\indent As pointed out in \cite{kang_lanchier_2012}, one of the most interesting aspects of the spatial model is that it results in
 predictions that differ significantly from its nonspatial deterministic mean-field approximation, thus indicating that the
 spatial component plays a key role in these interactions.
 We recall that the mean-field model is obtained by assuming that the population is well-mixing, and refer to Durrett and
 Levin \cite{durrett_levin_1994} for more details.
 This assumption results in a system of coupled differential equations for the densities of each type, and since
 the densities sum up to one, the mean-field model of \eqref{eq:generator-2} reduces to the one-dimensional system
\begin{equation}
\label{eq:mean-field}
 \begin{array}{rcl}
  \displaystyle \frac{du_1}{dt} & = &
  \displaystyle \frac{a_{12} \,u_1 \,u_2}{a_{12} \,u_1 + a_{22} \,u_2} \ - \
                \frac{a_{21} \,u_2 \,u_1}{a_{11} \,u_1 + a_{21} \,u_2} \vspace{12pt} \\ & = &
  \displaystyle \frac{(1 - a_2) \,u_1 \,(1 - u_1)}{a_2 \,(1 - u_1) + (1 - a_2) \,u_1} \ - \ \frac{(1 - a_1) \,u_1 \,(1 - u_1)}{a_1 \,u_1 + (1 - a_1) \,(1 - u_1)} \end{array}
\end{equation}
 where $u_i$ denotes the density of type $i$ individuals.
 Interestingly, the limiting behavior of the mean-field model depends on whether the parameters $a_1$ and $a_2$ are smaller or larger
 than one half, which has a biological interpretation in terms of altruism and selfishness.
 More precisely,
\begin{enumerate}
 \item type $i$ is said to be {\bf altruistic} whenever $a_i < 1/2$ since in this case the resources it produces are more beneficial
  for the other type than for its own type, \vspace{4pt}
 \item type $i$ is said to be {\bf selfish} whenever $a_i > 1/2$ since in this case the resources it produces are more beneficial
  for its own type than for the other type.
\end{enumerate}
 The mean-field model has two trivial equilibria given by $e_1 = 1$ and $e_2 = 0$, respectively.
 By setting the right-hand side of equation \eqref{eq:mean-field} equal to zero and assuming, in order to study the existence of
 nontrivial interior fixed points, that the product $u_1 \,(1 - u_1) \neq 0$, we find
 $$ (1 - a_2) (a_1 - 1/2) \,u_1 \ = \ (1 - a_1) (a_2 - 1/2) \,(1 - u_1) $$
 from which it follows that
 $$ e_* \ = \ \frac{(1 - a_1) (a_2 - 1/2)}{(1 - a_2) (a_1 - 1/2) + (1 - a_1) (a_2 - 1/2)} $$
 is the unique fixed point that may differ from zero and one.
 To analyze the global stability of the fixed points, we observe that the sign of the derivative \eqref{eq:mean-field} is given by
\begin{equation}
\label{eq:sign}
  \begin{array}{l}
  \sign ((1 - a_2) (a_1 - 1/2) \,u_1 - (1 - a_1) (a_2 - 1/2) \,(1 - u_1)) \vspace{4pt} \\ \hspace{20pt} = \
  \sign ((1 - a_2) (a_1 - 1/2) \,(u_1 - e_*) + (1 - a_1) (a_2 - 1/2) \,(u_1 - e_*)), \end{array}
\end{equation}
 which leads to the following three possible regimes for the mean-field model. \vspace{6pt} \\
 1 -- A selfish type always wins against an altruistic type. \vspace{4pt} \\
\begin{proof}
 Without loss of generality, we may assume that $a_1 < 1/2 < a_2$ indicating that type 1 is altruistic whereas type 2 is selfish.
 In this case, the first line of equation \eqref{eq:sign} implies that the derivative of $u_1$ is always negative when $u_1 \in (0, 1)$
 hence there is no fixed point in $(0, 1)$, and the trivial equilibrium $e_1$ is unstable while the trivial equilibrium $e_2$
 is globally stable.
\end{proof} \vspace{6pt} \\
 2 -- In the presence of two selfish types, the system is bistable. \vspace{4pt} \\
\begin{proof}
 Since $a_1 > 1/2$ and $a_2 > 1/2$ we have $e_* \in (0, 1)$.
 The second line of equation \eqref{eq:sign} implies that the derivative of $u_1$ has the same sign as $u_1 - e_*$ so the
 interior fixed point is unstable and the boundary equilibria locally stable:
 the system converges to $e_1$ if the initial density of type 1 is strictly larger than $e_*$, and to $e_2$ if the initial density
 of type 1 is strictly smaller than $e_*$.
\end{proof} \vspace{6pt} \\
 3 -- In the presence of two altruistic types, coexistence occurs. \vspace{4pt} \\
\begin{proof}
 Since $a_1 < 1/2$ and $a_2 < 1/2$ the fixed point $e_*$ again belongs to $(0, 1)$ as in the presence of two selfish types.
 However, the second line in equation \eqref{eq:sign} now implies that the derivative of $u_1$ has the same sign as $e_* - u_1$ so the
 interior fixed point is globally stable whereas the two boundary equilibria $e_1$ and $e_2$ are unstable. \vspace{4pt}
\end{proof}


\subsection*{The stochastic spatial model.}

\indent In order to state our results for the spatial model, we start by defining some of the possible regimes the spin
 system \eqref{eq:generator-2} can exhibit.
\begin{defin} --
 For the stochastic spatial model \eqref{eq:generator-2}, we say that
\begin{enumerate}
 \item type $i$ {\bf wins} if starting from any configuration with infinitely many type $i$,
  $$ \lim_{t \to \infty} \ P \,(\eta_t (x) = i) \ = \ 1 \quad \hbox{for all} \ x \in \Z^d. $$
 \item the system {\bf clusters} if starting from any translation invariant configuration,
  $$ \lim_{t \to \infty} \ P \,(\eta_t (x) = \eta_t (y)) \ = \ 1 \quad \hbox{for all} \ x, y \in \Z^d. $$
 \item type $i$ {\bf survives} if starting from any configuration with infinitely many type $i$,
  $$ \liminf_{t \to \infty} \ P \,(\eta_t (x) = i) \ > \ 0 \quad \hbox{for all} \ x \in \Z^d. $$
 \item both types {\bf coexist} if they both survive.
\end{enumerate}
\end{defin}
 As pointed out above, the analytical results for the spatial and the nonspatial models differ in many aspects, which reveals the
 importance of the spatial component in the interactions.
 These disagreements are more pronounced in low dimensions and emerge when both types are selfish or both types are altruistic, while
 in the presence of one selfish type and one altruistic type the analytical results for both the spatial and nonspatial models agree.

\indent To begin with, we focus on the one-dimensional nearest neighbor process.
 In this case, one obtains a complete picture of the phase diagram based on a simple analysis of the interfaces of the process which
 almost evolve according to a system of annihilating random walks, whereas the more challenging study in higher dimensions and for
 larger ranges of interactions relies on very different techniques.
 We point out that the one-dimensional nearest neighbor process appears as a degenerate case in which coexistence is not possible
 while increasing the spatial dimension or the range of the interactions may result in coexistence of altruistic types.
\begin{theor} --
\label{th:spatial-1D}
 Assume that $M = d = 1$. Then,
\begin{enumerate}
 \item Except for the case $a_1 = a_2 = 1$, the process clusters. \vspace{4pt}
 \item Assuming in addition that $a_1 < a_2$, type 2 wins.
\end{enumerate}
\end{theor}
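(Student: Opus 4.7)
The plan is to analyze the process through its interface configuration, which in the one-dimensional nearest-neighbor setting carries all the information about $\eta_t$. Set
$$ \xi_t \ = \ \{x + 1/2 : \eta_t (x) \neq \eta_t (x + 1) \} $$
and classify each interface as ``L'' (type 1 to the left, type 2 to the right) or ``R'' (type 2 to the left, type 1 to the right); consecutive interfaces necessarily alternate between L and R along $\Z$. The first task is to read off the transition rates of $\xi_t$ from the generator \eqref{eq:generator-2}. A site $x$ whose two neighbors are of its own type has $f_i (x, \eta) = 0$ in \eqref{eq:proba}, so its flip rate to the other type is zero (including under the first degenerate convention), and pair creation cannot occur. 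A site with mixed neighbors and current type $j$ has $f_i = f_j = 1$, so it flips at rate $1 - a_j$, which moves one interface by one step. A site whose two neighbors are both of the opposite type $i$ flips at rate $1$ (including via the second convention when $a_j = 1$), simultaneously annihilating the two interfaces that bound it. Consequently, an L interface moves left at rate $1 - a_1$ and right at rate $1 - a_2$, while an R interface has the symmetric rates.

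For Part 1, since $\eta_t (x) \neq \eta_t (y)$ exactly when the number of interfaces in the interval $(x, y)$ is odd, it suffices to show that the density of $\xi_t$ tends to zero. Provided $(a_1, a_2) \neq (1, 1)$ at least one jump rate is strictly positive, so $\xi_t$ is essentially a one-dimensional system of annihilating random walks with no creation, and the classical density-decay results for such systems yield density $\to 0$, hence clustering. The boundary regime $a_1 = 1 > a_2$ (and the symmetric one) is even easier: L interfaces move only rightward and R interfaces only leftward, so every type 2 block shrinks deterministically at speed $1 - a_2 > 0$ until its two bounding interfaces annihilate, and type 1 in fact wins outright.

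For Part 2 the extra input under $a_1 < a_2$ is a drift computation. An L interface has rightward drift $(1 - a_2) - (1 - a_1) = a_1 - a_2 < 0$, so it drifts left, whereas an R interface has rightward drift $a_2 - a_1 > 0$. Every type 1 block is flanked on its left by an R interface and on its right by an L interface, so these two bounding interfaces drift toward each other at combined speed $2 (a_2 - a_1) > 0$. Coupling with two independent biased random walks, a type 1 block of initial size $n$ disappears in time of order $n / (a_2 - a_1) + O (\sqrt{n})$. For any fixed site $x$ and any initial configuration with infinitely many type 2, the probability that $x$ lies in a surviving type 1 block at time $t$ therefore tends to zero, so type 2 wins.

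The main difficulty is controlling the interaction between neighboring interfaces: an interface's jump rates depend on whether its nearest neighbor interface sits at distance one (in which case an annihilation at rate $1$ replaces the ordinary walk step) or farther away, so interfaces are not genuinely independent walkers. The technical crux is therefore a coupling of $\xi_t$ with a bona fide system of annihilating random walks at rates $1 - a_1$ and $1 - a_2$ on $\Z$, from which Part 1 follows via the standard density-decay results and Part 2 via the drift estimate. In the symmetric critical case $a_1 = a_2 < 1$ the decay is only of order $t^{-1/2}$, so its rigorous derivation must invoke the classical 1D annihilating random walk literature; for the qualitative clustering claim, however, decay alone is enough.
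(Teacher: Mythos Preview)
Your interface picture is exactly the right setup, and for Part~1 in the neutral case $a_1 = a_2 \neq 1$ your argument coincides with the paper's (Lemma~\ref{lem:extinction}): no pair creation, symmetric annihilating random walks, recurrence forces density to zero.

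For Part~2, however, the paper takes a different and considerably cleaner route that avoids the difficulties you flag. Rather than tracking all interfaces simultaneously, the paper uses \emph{attractiveness} of the process to reduce to the initial configuration where the type~2 set is exactly an interval $[0,N] \cap \Z$. In that case there are only two interfaces, the type~2 set remains an interval for all time, and its cardinality is literally a nearest-neighbor random walk with right jump rate $2(1-a_1)$ and left jump rate $2(1-a_2)$. Gambler's ruin gives survival with probability $\geq 1 - c_0^N$ where $c_0 = (1-a_2)/(1-a_1) < 1$ (Lemma~\ref{lem:drift}). Since any configuration with infinitely many 2's almost surely produces an interval of $N+1$ consecutive 2's at some finite time, letting $N \to \infty$ finishes the argument (Lemma~\ref{lem:cone}). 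Clustering in the non-neutral case then comes for free from Part~2, rather than from a separate density-decay argument.

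Your Part~2 argument, by contrast, has a genuine gap. The claim that ``a type~1 block of initial size $n$ disappears in time of order $n/(a_2-a_1) + O(\sqrt n)$'' is not true as stated: the type~2 block bordering it on (say) the left may itself die by fluctuation, in which case the type~1 block merges with the next type~1 block and grows. With infinitely many blocks this can cascade, and the coupling with two \emph{independent} biased walks you invoke does not exist in any direct form, precisely because of the interaction you yourself identify as the ``main difficulty.'' This is not unfixable --- one can, for instance, track the rightmost type~2 site to the left of $x$ and argue it is eventually absorbed into an unbounded type~2 component --- but the bookkeeping is nontrivial and is entirely sidestepped by the attractiveness reduction. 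Similarly, your citation of ``classical density-decay results'' for Part~1 in the \emph{asymmetric} case is not standard: L-particles and R-particles have \emph{opposite} drifts here, so this is not the usual one-species annihilating walk, and the paper wisely does not go this way.
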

 The exclusion of the case $a_1 = a_2 = 1$ is simply due to the fact that, under this assumption, a vertex may flip to a new type
 if and only if all its neighbors already have this type, which implies that the process has infinitely many absorbing states.
 While in the absence of space one type wins regardless of the initial densities only in selfish-altruistic systems,
 Theorem \ref{th:spatial-1D} indicates that the most selfish/least altruistic type always wins in the one-dimensional
 nearest neighbor case.
 In particular, coexistence no longer occurs even in altruistic systems.
 Although this case is pathological, it is also symptomatic of the general behavior of the spatial model for which the existence
 of a dominant type that always wins may occur even when both types are altruistic.
 The next result shows that, in any spatial dimension and for any dispersal range, selfish-altruistic interactions result
 in the same outcome in the spatial and nonspatial mean-field models.
\begin{theor} --
\label{th:invasion}
 Assume that $a_1 < 1/2 < a_2$.
 Then, type 2 wins.
\end{theor}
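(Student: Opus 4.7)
The plan is to combine a monotone coupling with the voter model with a block-construction argument \`a la Bezuidenhout-Grimmett.

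Under $a_1 \leq 1/2 \leq a_2$, a direct computation yields
\[
\frac{(1-a_1)\,f_2}{a_1 f_1 + (1-a_1) f_2} \;\geq\; \frac{f_2}{f_1+f_2} \;\geq\; \frac{(1-a_2)\,f_1}{a_2 f_2 + (1-a_2) f_1},
\]
where the $f_i$ are evaluated at a common vertex in a common configuration. Through the standard graphical representation based on Poisson clocks and independent uniform random variables, this allows one to couple $\eta_t$ with a voter model $\xi_t$ so that $\{x : \xi_t(x) = 2\} \subseteq \{x : \eta_t(x) = 2\}$ at all times, provided the inclusion holds at time $0$. This monotonicity is a useful first step but does not alone yield type $2$ winning, since the voter model preserves translation-invariant densities and does not converge to a delta mass from a generic initial condition.

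To complete the argument I would prove two block lemmas, denoting by $R_{12}$ and $R_{21}$ the left- and right-hand flip rates of the display above. The \emph{growth lemma} uses the uniform lower bound $R_{12}(x,\eta) \geq \ep_0 > 0$ whenever $f_2(x,\eta) \geq 1$ (with $\ep_0$ depending only on $a_1$, $M$, $d$) to show that from a single type-$2$ seed inside a box $B_L = [-L,L]^d \cap \Z^d$, the process fills a sub-box of $B_L$ entirely with type $2$ within time of order $L$, with probability tending to $1$ as $L \to \infty$. The \emph{stability lemma} uses the upper bound $R_{21}(x,\eta) \leq ((1-a_2)/a_2)\,f_1(x,\eta)/f_2(x,\eta)$ together with $(1-a_2)/a_2 < 1$ (which requires $a_2 > 1/2$) to dominate any invasion of a fully-type-$2$ block from its boundary by a subcritical process, so that such a block remains mostly type $2$ for a time window of order $L$ with high probability, uniformly in the outside configuration. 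Combining the two lemmas, a standard renormalization/block construction yields a comparison with a $1$-dependent oriented site percolation on $\Z^d \times \Z_+$ whose parameter is arbitrarily close to $1$ for $L$ large, from which $\lim_{t \to \infty} P(\eta_t(x) = 2) = 1$ for every $x$, starting from any configuration with infinitely many type-$2$ vertices.

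The main obstacle is the stability lemma: the bound on $R_{21}$ is useful only when type $1$ is locally sparse, so one must propagate the scarcity of type $1$ from the boundary of the block inward in a controlled way. This requires an iterative peeling or sprinkling argument exploiting the strict inequality $(1-a_2)/a_2 < 1$; without the selfish condition $a_2 > 1/2$, voter-type monotonicity alone would not suffice, consistent with the expectation that coexistence may emerge at the threshold $a_2 = 1/2$.
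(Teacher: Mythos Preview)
Your architecture (monotone coupling plus block construction) is the right one, but the two block lemmas as stated do not close the argument. The growth lemma is false: a single type-$2$ vertex surrounded by type $1$ becomes type $1$ at rate one at its first update, so from one seed the probability of filling any sub-box is bounded away from $1$ uniformly in $L$. This is repairable by seeding the block with a full box of type $2$ rather than a single vertex, but then ``growth'' and ``stability'' collapse into the single assertion that a type-$2$ box $(-N,N]^d$ at time $0$ yields type-$2$ boxes $\pm Ne_i + (-N,N]^d$ at a time of order $N$---which is precisely the stability statement you flag as the main obstacle and leave to an unspecified peeling scheme.

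This is where the paper supplies the idea your outline lacks. Rather than coupling with the plain voter model (which, as you note, has no drift), the paper couples with a voter model \emph{perturbation}: at each update, with probability $1-\ep$ the new type is voter-sampled, and with probability $\ep$ the vertex becomes type $2$ whenever it has at least one type-$2$ neighbor. Both strict inequalities $a_1<1/2$ and $a_2>1/2$ are used to push the transition rates of $\eta$ past these perturbed rates by a uniform margin $\ep>0$; your displayed inequality recovers only the voter bound and discards exactly this margin. The payoff is that the perturbation has an explicit dual of branching--coalescing random walks (simple-random-walk jumps at rate $1-\ep$, branching onto all neighbors at rate $\ep$), and for each target point $(x,T)$ one can select a single dual path that steers through the branching events so as to drift toward the center of the initial box. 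Large-deviation estimates for this biased walk then give the block estimate with error $O(e^{-c\sqrt N})$, after which the Durrett block machinery and a dry-cluster argument finish the proof. A dual path for the plain voter model is an unbiased simple random walk with no mechanism to hit $(-N,N]^d$ in time $O(N)$, so the stronger coupling is essential, not cosmetic; your bound $R_{21}\leq ((1-a_2)/a_2)\,f_1/f_2$ does encode the needed asymmetry, but you have not shown how to convert it into a drift without something playing the role of the branching dual.
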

 Even though its rigorous proof is not straightforward, the result is predictable since, when one type is selfish and the other type
 is altruistic, regardless of the type of the vertex to be updated and the spatial configuration, each of the selfish neighbors is
 individually more likely to be selected to determine the new type than each of the altruistic neighbors.
 In contrast, in the case of selfish-selfish interactions covered by Theorem \ref{th:pure-birth} below, the spatial and nonspatial
 models disagree in all dimensions and for all dispersal ranges.
 Recall that in this case the mean-field model is bistable, indicating that the long-term behavior is determined by both the parameters
 and the initial densities.
 The next result shows on the contrary that, including local interactions, even if both types are selfish, enough asymmetry in the
 parameters implies that the most selfish type wins.
\begin{theor} --
\label{th:pure-birth}
 For all $a_1 < 1$ there is $\rho > 0$ such that type 2 wins when $a_2 > 1 - \rho$.
\end{theor}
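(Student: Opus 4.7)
The plan is to exploit the observation that the generator \eqref{eq:generator-2} degenerates into an essentially pure-growth (Richardson-type) dynamics for type 2 as $a_2 \uparrow 1$: a type 1 vertex with at least one type 2 neighbor flips to type 2 at a rate
\[ \frac{(1 - a_1) \,f_2}{a_1 \,f_1 + (1 - a_1) \,f_2} \ \geq \ p_{\min} (a_1, M, d) \ > \ 0 \]
that is bounded away from zero uniformly in $a_2$, while a type 2 vertex having at least one type 2 neighbor flips to type 1 at rate at most $C (M, d)(1 - a_2)$, which can be made arbitrarily small by taking $\rho$ small in $a_2 > 1 - \rho$. On short time scales the dynamics should therefore look like a Richardson model in which type 2 clusters only grow, so we can run a block construction comparing the set of type 2 sites to a supercritical oriented percolation.

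First I would check that the process is attractive with respect to the partial order $1 < 2$: a direct inspection of \eqref{eq:generator-2} shows that adding type 2's to the configuration raises the $1 \to 2$ flip rate and lowers the $2 \to 1$ flip rate at every vertex, so the basic coupling yields monotonicity. Together with the definition of winning, attractiveness reduces the theorem to showing that, for $a_2$ close enough to $1$, starting from a large enough cube of type 2's the type 2 region grows with positive probability and reaches every prescribed vertex.

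I would then run a block construction \`a la Bezuidenhout and Grimmett. Fix $L \gg M$ and $T > 0$, and call the space-time block of index $(n, k) \in \Z^d \times \Z_+$ \emph{good} if every vertex of $nL + [-L, L]^d$ has type 2 at time $kT$. In the limit $a_2 = 1$, starting from a good block at $(0, 0)$ the process dominates a Richardson-type growth with per-neighbor birth rate at least $p_{\min}$ and zero death rate throughout the interior of the type 2 cluster; by a shape theorem (or elementary large-deviation bounds for nearest-neighbor growth), one may choose $L$ and $T$ of comparable size so that, for any prescribed $\ep > 0$, the blocks indexed by $(\pm e_i, 1)$ are all good with probability at least $1 - \ep$. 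The main obstacle is showing that this estimate persists for $a_2 > 1 - \rho$: the total rate of $2 \to 1$ flips inside a cluster of volume at most $(2L + 1)^d$ is bounded by $C (1 - a_2) L^d$, so the probability of any such flip during $[0, T]$ is at most $C \rho L^d T$. Taking $\rho$ small relative to $\ep$, $L$ and $T$, and coupling the $a_2 = 1$ and $a_2 > 1 - \rho$ dynamics through their graphical representations until the first such bad flip, then transfers the estimate from the unperturbed to the perturbed regime.

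Once $\ep$ is small enough, the collection of good blocks stochastically dominates a supercritical oriented site percolation on $\Z^d \times \Z_+$ and therefore proliferates with positive probability from any single good block, with the wet region in fact growing linearly in every direction on the event of survival. Any initial configuration with infinitely many type 2 vertices contains, by a standard Borel--Cantelli argument exploiting the finite range of the interactions, infinitely many essentially independent attempts to nucleate a good block near every prescribed $x \in \Z^d$, so at least one succeeds almost surely; the oriented percolation then delivers type 2 to $x$, proving $P (\eta_t (x) = 2) \to 1$ and hence that type 2 wins.
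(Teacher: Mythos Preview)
Your strategy---compare to the pure-growth Richardson dynamics at $a_2 = 1$, set up a block construction, and then perturb to $a_2 > 1 - \rho$ by continuity---is exactly the paper's route (its Lemmas~\ref{lem:richardson} and~\ref{lem:perturbation}). The explicit bound $C\rho L^d T$ on the probability of a bad $2\to 1$ flip is the paper's continuity argument made concrete; the one imprecision is that the relevant spatial volume is the whole space--time box used in the comparison rather than the initial $(2L+1)^d$ cube, but since $L,T$ are fixed before $\rho$ this is harmless.

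The genuine gap is in your last paragraph. Coupling with supercritical oriented percolation only gives the one-sided inclusion $\{\text{wet}\}\subset\{\text{good}\}$, so it establishes \emph{survival} of type~2, not that type~2 \emph{wins}. Saying ``the oriented percolation then delivers type~2 to $x$'' conflates the wet cluster reaching the spatial vicinity of $x$ with the specific percolation site $(z_x,n)$ containing $x$ being good; even on the survival event that site is closed (hence possibly bad) with probability at least~$\gamma$ at every level~$n$, and nucleating infinitely many independent clusters does not help because they all see the same closed sites. To upgrade to $P(\eta_t(x)=1)\to 0$ one needs a separate argument, which the paper supplies in Lemma~\ref{lem:dry}: redefine ``good'' so that the block is entirely type~2 throughout the whole interval $[nT,(n+1)T]$, observe that any surviving type~1 lineage is then confined to bad blocks and hence to \emph{dry} sites of the coupled percolation (with sideways steps allowed, a ``generalized dry path''), and use a contour estimate after Durrett~\cite{durrett_1992} to show that such dry paths do not percolate when~$\gamma$ is small. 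Without this ingredient the conclusion $P(\eta_t(x)=2)\to 1$ does not follow from the block comparison alone.
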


\begin{figure}[t!]
  \centering
  \scalebox{0.45}{\input{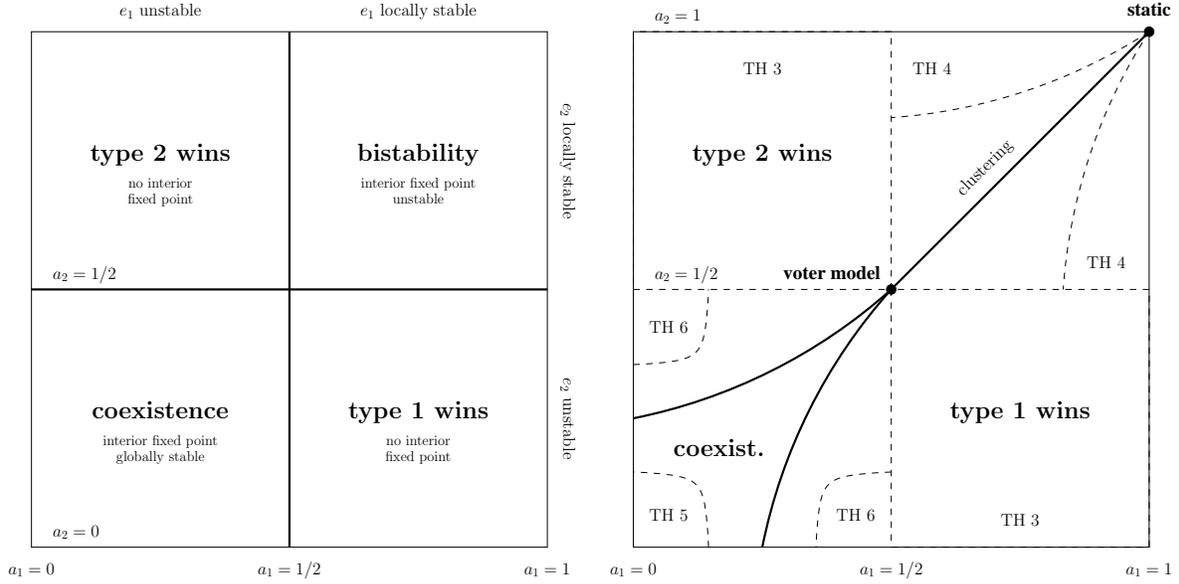}}
  \caption{\upshape Phase diagrams of the nonspatial (left) and spatial (right) models.}
\label{fig:diagrams}
\end{figure}

\noindent Numerical simulations in two dimensions indicate more generally that, when type 2 is selfish, it wins whenever $a_1 < a_2$.
 In the neutral case, the obvious symmetry of the evolution rules implies that none of the types wins, but simulations indicate that the
 population again clusters, with boundaries getting sharper as the common value of $a_1 = a_2 > 1/2$ increases.
 This suggests that, in the presence of two selfish types, the long-term behavior of the one-dimensional nearest neighbor process occurs
 in two dimensions as well, a property that we believe is true in any spatial dimension and for any range of interactions.
 Looking finally at altruistic-altruistic interactions, recall that coexistence always occurs in the mean-field approximation whereas the
 least altruistic type always wins in the one-dimensional nearest neighbor process.
 Our last two results show that the long-term behavior of the process in higher spatial dimensions and/or for larger dispersal ranges is
 intermediate between these two extreme behaviors.
\begin{theor} --
\label{th:coexistence}
 For all $(M, d) \neq (1, 1)$, there is $\rho > 0$ such that coexistence occurs when
 $$ a_1 < \rho \quad \hbox{and} \quad a_2 < \rho. $$
\end{theor}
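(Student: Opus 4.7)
The plan is a block construction comparing the stochastic spatial model to supercritical oriented percolation, using that in the limit $a_1 = a_2 = 0$ the dynamics reduce to a well-studied threshold-type voter model. By the symmetry of the hypothesis in $a_1$ and $a_2$, it suffices to show survival of type 1, since the same argument applied to type 2 then yields coexistence.

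First I would identify the limit. Setting $a_1 = a_2 = 0$ in equation (\ref{eq:proba}) gives a transition rate of $1$ whenever $f_i \geq 1$ and $0$ otherwise by convention (1), so the limit is the threshold-$1$ voter model, in which each vertex flips to the opposite type at rate $1$ exactly when it has at least one opposite-type neighbor. A short calculation shows that for small $a_1, a_2 > 0$ and $f_i \geq 1$, the flip rate in our model differs from $1$ by at most $C \max (a_1, a_2)$ for some constant $C = C (M, d)$. Hence the graphical constructions of our model and of the threshold-$1$ voter model can be coupled to agree with probability close to $1$ on any bounded space-time region when $\max (a_1, a_2)$ is small enough; this is where continuity of the transition probability (\ref{eq:proba}) in the parameters plays its role.

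Next I would carry out the block construction. Fix large integers $L$ and $T$ and consider the space-time box $[-L, L]^d \times [0, T]$. I would define a good event $G$, measurable with respect to the Poisson clocks and flip choices in a bounded enlargement of this box, such that on $G$ the following implication holds: if at time $0$ a central sub-box contains at least a density $\delta$ of type 1 vertices, then at time $T$ the same is true for the translates of this sub-box along each coordinate direction by the appropriate macroscopic distance. The crucial input is that the threshold-$1$ voter model satisfies this spreading property with probability tending to $1$ as $L, T \to \infty$, provided $(M, d) \neq (1, 1)$; the coupling above then gives the same bound for small $a_1, a_2 > 0$. A direct application of the standard comparison theorem with oriented percolation produces a supercritical percolation process dominating the set of populated blocks, which, together with translation invariance and the assumption of infinitely many type 1 vertices in the initial configuration, yields survival of type 1.

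The main obstacle is establishing the spreading property of the threshold-$1$ voter model quantitatively. This amounts to a non-clustering statement that distinguishes the excluded case $(M, d) = (1, 1)$, in which the interfaces perform annihilating random walks and the process clusters, from all other cases, in which the interfaces are genuinely higher-dimensional and cannot annihilate. I would prove it either by a self-contained geometric argument based on Richardson-type bounds on the growth of droplets of each type, showing that small clusters of type 1 seeded inside a region of type 2 survive long enough and spread enough to repopulate neighboring macroscopic regions, or by a comparison with existing coexistence results for nonlinear voter models. Handling the non-monotonicity of the dynamics and controlling the dependence structure so that the oriented-percolation comparison applies will be the most delicate parts.
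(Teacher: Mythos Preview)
Your overall strategy---identify the degenerate limit, establish a block estimate there, and perturb---is sound, and your identification of the limit $a_1=a_2=0$ as the threshold-one voter model is correct. But the paper takes a different and shorter route that sidesteps precisely the obstacle you flag at the end.

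Rather than working with the non-monotone threshold voter model and building a block construction for it, the paper observes that at the nearby point $a_1=0$, $a_2=\rho$ the set of type~1 vertices stochastically dominates the occupied set of a \emph{threshold contact process} with birth rate $\alpha(\rho)=(1-\rho)/(1+((2M+1)^d-2)\rho)$. This process is attractive, so one can invoke off-the-shelf results: Bezuidenhout--Gray gives that the survival region is open, and Liggett/Handjani give survival at $\alpha=1$ when $(M,d)\neq(1,1)$; together these yield $\alpha_c<1$, hence $\alpha(\rho)>\alpha_c$ for small $\rho$, and the complete convergence theorem of Cox--Durrett finishes survival of type~1. Monotonicity of the survival probabilities in $(a_1,a_2)$ and the obvious symmetry then give coexistence on the whole corner $a_1,a_2<\rho$. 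No new block construction is needed.

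What your approach buys, if carried out, is a self-contained argument that does not lean on the Bezuidenhout--Gray openness result or the Cox--Durrett complete convergence theorem. What the paper's approach buys is that the hard part---your ``spreading property'' for the threshold voter model, which you correctly identify as the crux and which is essentially equivalent to the coexistence statement you are trying to prove---is replaced by a one-line monotone comparison to a process for which the relevant facts are already in the literature. In particular, the non-monotonicity you worry about never enters, because the comparison process is monotone even though the original model is not.
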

 In words, sufficiently strong mutual altruism translates into coexistence of both types.
 Numerical simulations of the two-dimensional system suggest in addition that two altruistic types coexist in the neutral case
 $a_1 = a_2 < 1/2$ therefore the coexistence region of the spatial model stretches up to the parameter point corresponding to voter model
 interactions.
 Interestingly, our last result shows that there exists a parameter region for which even altruistic types cannot coexist, thus
 indicating that the inclusion of local interactions translates into a reduction of the coexistence region in general and not only in the
 one-dimensional nearest neighbor case.
\begin{theor} --
\label{th:reduction}
 There exists $\rho > 0$ such that type 2 wins whenever
 $$ a_1 < \rho \quad \hbox{and} \quad a_2 > 1/2 - \rho. $$
\end{theor}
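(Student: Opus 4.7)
When $(M,d)=(1,1)$, choosing $\rho<1/4$ gives $a_1<\rho<1/2-\rho<a_2$, so $a_1<a_2$ and Theorem \ref{th:spatial-1D} already yields the conclusion. For the rest of the argument we may therefore assume $(M,d)\neq(1,1)$, and we proceed by a block construction in the style of Bezuidenhout-Grimmett / Durrett: the goal is to couple the set of type-2 sites with a supercritical $1$-dependent oriented site percolation on $\Z^d\times\Z_+$.

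\textbf{The block event at the extreme parameters.} The natural reference point is $(a_1,a_2)=(0,1/2)$, where the dynamics reduce transparently to the following: a type-1 vertex with at least one type-2 neighbor flips to 2 at rate 1, while a type-2 vertex flips to 1 at rate $f_1/(f_1+f_2)$, which vanishes as soon as its neighborhood is fully occupied by type 2. Given $\ep>0$, I will exhibit a box $B_L=[-L,L]^d$, a time $T$, and a seed $S\subset B_L$ of type-2 vertices such that, for the process at $(a_1,a_2)=(0,1/2)$ started from type 2 on $S$ and type 1 elsewhere, with probability at least $1-\ep$ the configuration $\eta_T$ contains a type-2 translate of $S$ inside each of the $2d$ neighboring blocks $2Le_k+B_L$, $k=1,\ldots,d$. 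Since the transition probabilities in \eqref{eq:proba} are continuous in $(a_1,a_2)$ (this is precisely where the conventions adopted in Section \ref{sec:intro} become essential) and the block event depends on only finitely many Poisson clocks in a bounded space-time window, a standard coupling argument produces $\rho=\rho(\ep)>0$ such that the same block event still has probability at least $1-2\ep$ whenever $a_1<\rho$ and $a_2>1/2-\rho$.

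\textbf{Percolation comparison and conclusion.} Translating the block event in space and time and feeding the resulting comparison into the Liggett-Schonmann-Stacey theorem dominates the $1$-dependent oriented site percolation by a supercritical independent one, provided $\ep$ is small. From this domination, type 2 spreads linearly from any finite seed, and translation invariance of the initial condition together with the survival of type 2 from any configuration with infinitely many type-2 vertices upgrades this to $\lim_{t\to\infty}P(\eta_t(x)=2)=1$ for every $x$, which is the definition of "type 2 wins".

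\textbf{Main obstacle.} The delicate step is establishing the block event at $(a_1,a_2)=(0,1/2)$. If one simply bounds the type-2 death rate $f_1/(f_1+f_2)$ from above by $1$ and the birth rate from below by $1$, one obtains the threshold-1 contact process at its borderline parameter, whose survival is not automatic. The proof must therefore exploit the self-damping of the death rate: once a contiguous cluster of type-2 vertices forms inside $B_L$, the interior sites have $f_2\gg f_1$ and are effectively frozen, while the exterior of the cluster keeps flipping type-1 boundary vertices to type 2 at rate 1. Controlling these spatial correlations, so as to show that the cluster grows ballistically and fills the neighboring blocks before time $T$, is the main technical work of the argument.
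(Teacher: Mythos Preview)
Your overall architecture---prove the block event at the reference point $(a_1,a_2)=(0,1/2)$, then perturb using continuity of the rates---is exactly the paper's strategy. The divergence is entirely in how the block event at $(0,1/2)$ is established, and there your proposal has a genuine gap.

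You propose a forward-in-time growth argument: interior type-2 sites are frozen because $f_1=0$, so a thick type-2 cluster should spread ballistically. The intuition is sound, but you explicitly flag the execution as ``the main technical work'' and leave it undone. The difficulty is real: at the boundary of a type-2 cluster, a type-2 site with roughly half its neighbors of type~1 reverts at rate about $1/2$, while the adjacent type-1 site flips in at rate~$1$. This is not a clean domination by any standard growth model, and the spatial correlations you mention (how fast the interior thickens relative to how fast the boundary erodes) require a concrete mechanism you have not supplied.

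The paper takes a different route entirely. It introduces \emph{breaking points}: space-time points $(x,t)$ in the graphical representation where a solid arrow $y\to x$ at time $t$ is preceded, going backward, by a solid arrow $x\to y$ with no other arrows touching $x$ or $y$ in between. The key lemma is that at a breaking point, $\bar\eta_t(x)=1$ \emph{only if} all neighbors of $x$ are of type~1. This lets one build a \emph{selected dual path} $(Y_s(x))$ backward from $(x,T)$: the path follows solid arrows as in the voter model, but at each breaking point it may choose which neighbor to jump to, and one directs it toward the seed box. Breaking points occur at a positive rate $\epsilon>0$, so the path has a genuine drift, and random-walk large-deviation estimates (parallel to those in Section~\ref{sec:invasion}) show it lands in the seed with probability $1-o(1)$. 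The duality relation $\bar\eta_T(x)=1\Rightarrow\bar\eta_0(Y_T(x))=1$ then gives the block event.

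So the paper exploits the asymmetry between the threshold rule ($1\to 2$) and the voter rule ($2\to 1$) \emph{backward} in time, where it manifests as occasional branching that one can steer. Your forward approach would have to control the same asymmetry through a cluster-shape argument, which is plausible but is not a step one can wave through; it is precisely the content of the theorem.
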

 We note that, even though the inclusion of local interactions always translate into a reduction of the coexistence
 region, numerical simulations indicate that this reduction becomes less pronounced while increasing the spatial dimension or
 the range of the interactions.
 The results for the mean-field model and spatial model when $(M, d) \neq (1, 1)$ are summarized in Figure \ref{fig:diagrams}.
 In the phase diagram of the spatial model, the dashed regions are the ones that are covered by our theorems, while the thick
 lines are the critical curves suggested by simulations.
 For a detailed analysis of the nonspatial model and additional results based on numerical simulations for the spatial model
 in the presence of three species, we refer the reader to the companion paper \cite{kang_lanchier_2012}.

\indent The rest of this paper is devoted to the proofs.
 To avoid cumbersome notations and messy calculations, we prove all our results, with the exception of Theorem \ref{th:coexistence},
 when the range of the interactions is equal to one.
 However, we point out that all the techniques used in the proofs easily extend to larger dispersal ranges.
 The reason for excluding Theorem \ref{th:coexistence} from this rule is that it is the only result that shows a disagreement
 between the one-dimensional nearest neighbor case and the other cases, thus indicating that the range of the interactions
 plays a specific role in the presence of strong altruistic-altruistic interactions.




\section{The one-dimensional spatial model}
\label{sec:spatial-1D}

\indent This section is devoted to the proof of Theorem \ref{th:spatial-1D} which gives a complete picture of the long-term behavior
 of the one-dimensional process with nearest neighbor interactions.
 To begin with, we study the evolution in the non-neutral case, and look at the process starting with only type 2 individuals in an
 interval of length $N + 1$.
 The following lemma shows that the survival probability of the individuals of type 1 decreases exponentially with $N$ when $a_1 < a_2$.
\begin{lemma} --
\label{lem:drift}
 Assume that $M = d = 1$ and $a_1 < a_2$. Then,
 $$ P \,(\eta_t (x) \to 2 \ \hbox{for all} \ x \in \Z \ | \ \eta_0 (x) = 2 \ \hbox{for all} \ x \in [0, N] \cap \Z) \ \geq \ 1 - c_0^N $$
 where $c_0 := (1 - a_2) \,(1 - a_1)^{-1} < 1$.
\end{lemma}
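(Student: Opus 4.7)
The plan is to track the maximal type-$2$ interval descending from $[0,N]$ and to show, via a biased random walk comparison, that it engulfs all of $\Z$ with probability at least $1-c_0^N$. Writing $C_t=[L_t,R_t]$ for this interval while it is nonempty, a first observation is that interior vertices of $C_t$ have both neighbors of type $2$, so \eqref{eq:generator-2} makes their rate of flipping to type $1$ vanish; hence $C_t$ can change only at its two endpoints or at the sites immediately outside. Applying \eqref{eq:generator-2} at these boundary sites, where one finds $f_1=f_2=1$, I get that as long as $|C_t|\geq 2$ the endpoint $R_t$ moves inward by exactly one site at rate $1-a_2$ and outward by at least one site at rate at least $1-a_1$ (the outward rate is exactly $1-a_1$ when the site beyond the boundary is locally surrounded by type $1$ and equal to $1$ in the favorable case of merging with an adjacent type-$2$ block), with the symmetric statement for $L_t$; when $|C_t|=1$, the lone vertex flips to type $1$ at rate $1$, killing the cluster.

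Next I would introduce $D_t:=R_t-L_t$, which satisfies $D_0\geq N$, and couple it with a continuous-time biased random walk $\tilde D_t$ on $\Z$ of up-rate $2(1-a_1)$ and down-rate $2(1-a_2)$ starting at $\tilde D_0=N$. The coupling is the natural one: whenever both $D_t$ and $\tilde D_t$ are at least $1$, each up-step of $\tilde D_t$ is matched by a simultaneous up-step of $D_t$ (possibly of size greater than one) and each down-step of $D_t$ is matched by a down-step of $\tilde D_t$; this preserves $D_t\geq\tilde D_t$ while both remain positive. The assumption $a_1<a_2$ makes $c_0=(1-a_2)/(1-a_1)\in(0,1)$, so $c_0^{\tilde D_t}$ is a bounded nonnegative martingale, and optional stopping at the hitting time of $0$ yields the classical gambler's-ruin estimate
$$ P \,( \tilde D_t \hbox{ ever reaches } 0 \mid \tilde D_0 = N ) \ = \ c_0^N. $$
Since $C_t$ can die only after $D_t$ reaches $0$, and this forces $\tilde D_t$ to reach $0$ under the coupling, I obtain $P(\hbox{cluster dies})\leq c_0^N$.

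Finally, on the event $\{D_t\geq 1\hbox{ for all }t\}$, which has probability at least $1-c_0^N$ by the above, the cluster always has length at least two, so the endpoints $R_t$ and $L_t$ evolve throughout with the dynamics derived in the first paragraph and therefore have strictly positive outward drift $a_2-a_1>0$. The strong law of large numbers then gives $R_t\to+\infty$ and $L_t\to-\infty$ almost surely, so for every $x\in\Z$ we have $L_t<x<R_t$ for all sufficiently large $t$ and consequently $\eta_t(x)=2$ from that time on, which realizes the event $\{\eta_t(x)\to 2\hbox{ for all }x\in\Z\}$. I expect the main obstacle to be setting up the coupling in the previous paragraph in the absence of a natural monotonicity for this model: one needs to verify carefully that, uniformly in the configuration outside $C_t$ (including the singleton regime and the presence of adjacent type-$2$ blocks), the outward flip rate at each boundary site is at least $1-a_1$ and the inward flip rate at most $1-a_2$. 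Once this uniform rate bound is established, the random-walk dominance together with the gambler's-ruin estimate directly yield the claimed bound $1-c_0^N$.
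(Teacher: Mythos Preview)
Your proposal is correct and follows essentially the same route as the paper: compute the boundary rates ($1-a_2$ for contraction, at least $1-a_1$ for expansion) and apply the gambler's-ruin formula for a biased walk. The one point worth flagging is that your remark about ``the absence of a natural monotonicity for this model'' is mistaken: the flip rates in \eqref{eq:proba} are monotone in the local fractions of each type, so the process is attractive. The paper uses this at the outset to reduce to the worst-case initial configuration with type~2 exactly on $[0,N]$ and type~1 everywhere else; the set of type-2 vertices is then always precisely an interval, the boundary rates are exactly $1-a_2$ and $1-a_1$, and the count of type-2 vertices is literally an asymmetric random walk rather than merely dominating one. This removes the coupling verification you identify as the main obstacle, though your direct argument via stochastic domination of the interval length is also valid and gives the same bound.
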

\begin{proof}
 First, we observe that the process is attractive, which directly follows from the monotonicity of the probabilities in \eqref{eq:proba}
 with respect to the fractions of neighbors of each type.
 In particular, it suffices to prove the result when starting with exactly $N + 1$ individuals of type 2.
 To understand how the number of type 2 vertices evolve, let $x \in \Z$ and assume that the neighbors of $x$ have different types.
 Then, the rate at which vertex $x$ jumps from type 2 to type 1 is
\begin{equation}
\label{eq:left}
  c_- \ := \ \frac{(1 - a_2) \,f_1 (x, \eta)}{(1 - a_2) \,f_1 (x, \eta) + a_2 \,f_2 (x, \eta)} \ = \
      \frac{1 - a_2}{(1 - a_2) + a_2} \ = \ 1 - a_2
\end{equation}
 while the rate at which it jumps from type 1 to type 2 is
\begin{equation}
\label{eq:right}
  c_+ \ := \ \frac{(1 - a_1) \,f_2 (x, \eta)}{a_1 \,f_1 (x, \eta) + (1 - a_1) \,f_2 (x, \eta)} \ = \
      \frac{1 - a_1}{a_1 + (1 - a_1)} \ = \ 1 - a_1.
\end{equation}
 In particular, as long as there are at least two individuals of type 2, the number of such individuals performs a random walk
 that jumps to the right at rate $2 c_+$ and to the left at rate $2 c_-$ since, due to one-dimensional nearest neighbor interactions
 and the choice of the initial configuration, at all times the set of vertices of type 2 is an interval.
 Moreover, on the event that type 2 survives, the leftmost vertex of type 2 converges almost surely to $- \infty$ while the rightmost
 vertex of type 2 converges almost surely to $+ \infty$, therefore the probability to be estimated is equal to the probability that
 the asymmetric random walk described above tends to infinity.
 The result then reduces to a standard estimate for asymmetric random walks.
 More precisely, let $Z_n$ denote the number of individuals of type 2 after $n$ updates of the system, and let
 $$ P_i \ := \ P \,(Z_n = 1 \ \hbox{for some} \ n \geq 0 \ | \ Z_0 = i). $$
 Then, using a first-step analysis, for all $i \geq 2$ we have
 $$ \begin{array}{rcl}
    (c_+ + c_-) \,P_i & = & c_+ \,P_{i + 1} + c_- \,P_{i - 1} \quad \hbox{and} \vspace{4pt} \\
      P_{i + 1} - P_i & = & c_0 \,(P_i - P_{i - 1}) \ = \ \cdots \ = \ c_0^{i - 1} \,(P_2 - P_1) \end{array} $$
 where $c_0 = c_- / c_+ = (1 - a_2) \,(1 - a_1)^{-1}$.
 Since $P_1 = 1$, we deduce that $P_i = c_0^{i - 1}$ and
 $$ \begin{array}{l}
  P \,(\eta_t (x) \to 2 \ \hbox{for all} \ x \in \Z \ | \ \eta_0 (x) = 2 \ \hbox{for all} \ x \in [0, N] \cap \Z) \vspace{4pt} \\ \hspace{20pt} \geq \
  P \,(\eta_t (x) \to 2 \ \hbox{for all} \ x \in \Z \ | \ \{x : \eta_0 (x) = 2 \} = [0, N] \cap \Z) \ \geq \ 1 - P_{N + 1} \ = \ 1 - c_0^N. \end{array} $$
 This completes the proof.
\end{proof} \\ \\
 With Lemma \ref{lem:drift} in hands, we can now establish the first part of Theorem \ref{th:spatial-1D} in the non-neutral case
 as well as the second part of the theorem.
\begin{lemma} --
\label{lem:cone}
 Assume that $M = d = 1$ and $a_1 < a_2$. Then, type 2 wins.
\end{lemma}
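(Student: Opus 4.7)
The strategy is to combine Lemma \ref{lem:drift} with an attractiveness-based seed argument. Fix $x \in \Z$ and $\epsilon > 0$, and choose $N$ large enough that $c_0^N \leq \epsilon/2$. By Lemma \ref{lem:drift} and the strong Markov property applied at some time $T$, if the probability that $\eta_T$ contains an interval of $N+1$ consecutive type 2 vertices is at least $1 - \epsilon/2$, then $P \,(\eta_t (x) \to 2) \geq (1 - \epsilon/2)^2 \geq 1 - \epsilon$. Since $\epsilon$ is arbitrary, it suffices to produce, with probability close to one at some finite time $T$, an interval of $N+1$ consecutive type 2 vertices in $\eta_T$.

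The spin system \eqref{eq:generator-2} is attractive: the probabilities in \eqref{eq:proba} are monotone in the fractions of neighbors of each type, so the standard graphical construction couples any two processes in such a way that having more type 2's initially yields at least as many type 2's at every later time. For each $y \in \Z$, let $\xi^y_t$ denote the auxiliary process starting with a single type 2 at $y$ and type 1 elsewhere. As in the proof of Lemma \ref{lem:drift}, the type 2 vertices of $\xi^y_t$ always form a (possibly empty) interval $[L_t, R_t]$, whose size performs the asymmetric random walk with positive drift already analyzed there. A first-step computation shows that this walk reaches size $N+1$ with a strictly positive probability starting from size $1$, and hence there exist $T > 0$ and $q > 0$, independent of $y$, such that
\[ P \,(\xi^y_T \ \hbox{contains an interval of} \ N + 1 \ \hbox{consecutive type 2's}) \ \geq \ q. \]

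Since $\eta_0$ has infinitely many type 2 vertices, I can pick type 2 vertices $y_1 < \cdots < y_K$ in $\eta_0$ with gaps $y_{k+1} - y_k \geq \Delta$, where $K$ is taken large and $\Delta$ large compared to $T$. Coupling $\eta$ and the $\xi^{y_k}$'s on a common probability space via the graphical representation, attractiveness gives $\eta_t \geq \xi^{y_k}_t$ for every $k$ and $t$. A finite speed of propagation estimate, based on the fact that the endpoints of the cluster in each $\xi^{y_k}$ perform random walks with jump rates bounded above by $c_+ + c_- \leq 2$, shows that for $\Delta$ of order $T$ large enough, the clusters of the $\xi^{y_k}$'s at time $T$ lie in pairwise disjoint intervals with probability at least $1 - \epsilon/4$. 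Conditional on this disjointness event, the events $F_k := \{\xi^{y_k}_T \ \hbox{contains an interval of} \ N + 1 \ \hbox{type 2's}\}$ depend on disjoint families of Poisson clocks and are therefore mutually independent, each with probability at least $q$; taking $K$ large enough gives $P \,(\bigcup_k F_k) \geq 1 - \epsilon/4$. On $\bigcup_k F_k$, attractiveness forces $\eta_T$ itself to contain an interval of $N + 1$ consecutive type 2 vertices, which is precisely the event needed to conclude via the reduction in the first paragraph.

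The main technical obstacle is the finite speed of propagation estimate used above: one must control the spread of the type 2 cluster of $\xi^y$ uniformly in $y$. This is routine for nearest-neighbor dynamics and is obtained by observing that the total jump rate of the pair $(L^y_t, R^y_t)$ is uniformly bounded, so $R^y_T - y$ and $y - L^y_T$ have exponential tails. These tails are more than sufficient to choose $\Delta = \Delta(T, \epsilon)$ so that the regions of influence of the distinct seeds $y_k$ remain disjoint up to time $T$ with the required probability.
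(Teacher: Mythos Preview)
Your proof is correct and follows the same strategy as the paper: use attractiveness together with Lemma~\ref{lem:drift} once an interval of $N+1$ consecutive type~2 vertices has been produced. The paper's own argument is terser at the key step---it simply asserts that, starting from infinitely many type~2's, such an interval appears almost surely at some finite time---whereas you supply an explicit seed argument (well-separated single-site processes plus a finite-speed-of-propagation bound to obtain approximate independence of the events $F_k$), which is a legitimate way to justify that assertion; the independence step as you phrase it is slightly informal, but the standard fix (replace $F_k$ by $F_k\cap\{\text{cluster stays in a fixed box}\}$, events genuinely determined by disjoint Poisson families) is clearly what you intend.
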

\begin{proof}
 Let $N$ be a positive integer.
 Then, starting from any initial configuration with infinitely many vertices of type 2, with probability one, there exist $z \in \Z$ and
 $t < \infty$ such that
 $$ \eta_t (x) = 2 \ \ \hbox{for} \ \ x = z, z + 1, \ldots, z + N. $$
 Using in addition that the process is attractive, and that the evolution rules are translation invariant in space and time,
 Lemma \ref{lem:drift} implies that
 $$ \begin{array}{l}
  P \,(\eta_t (x) \to 2 \ \hbox{for all} \ x \in \Z \ | \,\card \,\{x : \eta_0 (x) = 2 \} = \infty) \vspace{4pt} \\ \hspace{40pt} \geq \
  P \,(\eta_t (x) \to 2 \ \hbox{for all} \ x \in \Z \ | \ \eta_0 (x) = 2 \ \hbox{for all} \ x \in [0, N] \cap \Z) \ \geq \ 1 - c_0^N. \end{array} $$
 Since this holds for all $N$ and since $c_0 < 1$, the lemma follows.
\end{proof} \\ \\
 To complete the proof of Theorem \ref{th:spatial-1D}, it remains to show that, when $a_1 = a_2 \neq 1$, the process clusters.
 Due to the particular geometry of the configurations in the one-dimensional nearest neighbor case, the proof reduces to the analysis
 of an auxiliary process that we shall call the interface process: the spin system $(\xi_t)$ defined on the translated lattice
 $\D = \Z + 1/2$ by setting
 $$ \xi_t (v) \ = \ \bigg|\,\eta_t \bigg(v + \frac{1}{2} \bigg) - \eta_t \bigg(v - \frac{1}{2} \bigg) \bigg| \quad
    \hbox{for all} \ \ v \in \D. $$
 In words, the process has a one at sites $v \in \D$ which are located between two vertices that have different types, and a zero at sites
 which are located between two vertices that have the same type, so the process keeps track of the interfaces of the spatial model.
\begin{lemma} --
\label{lem:extinction}
 Assume that $M = d = 1$ and $a_1 = a_2 \neq 1$. Then, the process clusters.
\end{lemma}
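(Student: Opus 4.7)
The plan is to analyze the interface process $(\xi_t)$, show that the density of interfaces converges to zero, and then deduce clustering by a union bound. I first describe the local dynamics of $\xi_t$ in the neutral case $a_1 = a_2 = a < 1$. An update at $x$ toggles exactly the two sites $x - 1/2$ and $x + 1/2$ of $\D$, and by \eqref{eq:proba} it has rate $0$ when both neighbors of $x$ agree with $x$, rate $1$ when both disagree (configuration $iji$, with $\xi$ equal to $1$ at both adjacent sites of $\D$), and rate $1 - a$ when exactly one neighbor disagrees. The total number of $1$'s in $\xi_t$ is therefore nonincreasing along sample paths: the rate-$1$ event annihilates a pair of adjacent particles, while the rate-$(1-a)$ event merely translates a single particle by one lattice step, and in the neutral case the two possible directions have the same rate. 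In particular, $\xi_t$ evolves as a system of symmetric annihilating random walks with no spontaneous creation.

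The next step is to prove that the density $\theta(t) := P \,(\xi_t(v) = 1)$ converges to zero as $t \to \infty$. Since $\eta_0$ is translation invariant, so is the law of $\xi_t$, and if $\theta(0) = 0$ then $\eta_0$ is a.s. constant and the result is trivial, so I assume $\theta(0) > 0$. In dimension one, a symmetric noncreative system in which every pair of neighboring particles annihilates at positive rate must have density converging to zero. I would prove this either by coupling $(\xi_t)$ with a classical nearest-neighbor annihilating random walk on $\Z$ for which the standard $O(t^{-1/2})$ density decay applies, or by a self-contained argument based on the recurrence of one-dimensional symmetric random walks, which forces any two tagged particles to eventually become adjacent and get annihilated.

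Once $\theta(t) \to 0$ is established, clustering follows immediately. Indeed, the event $\eta_t(x) \neq \eta_t(y)$ forces at least one interface to lie in $(x, y) \cap \D$, so by translation invariance
\begin{equation*}
 P \,(\eta_t (x) \neq \eta_t (y)) \ \leq \ \sum_{v \,\in \,\D \,\cap \,(x, y)} P \,(\xi_t (v) = 1) \ = \ (y - x) \,\theta (t) \ \longrightarrow \ 0
\end{equation*}
for all $x, y \in \Z$, which is the desired clustering property.

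The main obstacle is the density decay step, because $(\xi_t)$ is not a textbook annihilating random walk: two particles may become adjacent without annihilating instantly, and can even separate again at rate $1 - a$ before the rate-$1$ annihilation event fires. However, the three structural features driving the classical proofs are all present---symmetry of the walks, absence of spontaneous creation, and a strictly positive annihilation rate whenever two particles meet---so a stochastic domination against the classical instantly-annihilating random walk should close the gap.
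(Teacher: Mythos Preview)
Your approach is essentially the same as the paper's: pass to the interface process, show that the particle density tends to zero, and conclude clustering by a union bound. The paper's density-decay argument is exactly your second option---monotonicity of the particle count plus translation invariance gives a limiting density, and recurrence of one-dimensional symmetric random walks, combined with the positive annihilation probability each time two particles become adjacent, forces this limit to be zero.
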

\begin{proof}
 Thinking of each site $v \in \D$ as being occupied by a particle if $\xi (v) = 1$ and as empty otherwise, the idea of the proof is
 to establish almost sure extinction of this system of particles, which is equivalent to clustering of the spatial model.
 First, the proof of Lemma \ref{lem:drift} indicates that a vertex of either type, say type $i$, changes its type \vspace{-2pt}
\begin{enumerate}
 \item at rate one if none of its two nearest neighbors is of type $i$, \vspace{4pt}
 \item at rate $c_- = c_+ > 0$ defined in \eqref{eq:left} and \eqref{eq:right} if its neighbors have different types.
\end{enumerate}
 This induces the following dynamics for the interface process: a particle at site $v \in \D$ jumps to each of its empty nearest neighbors
 at rate $c_+$ and two particles distance one apart annihilate each other at rate one.
 More formally, the Markov generator is given by
 $$ \begin{array}{rcl} L_{\xi} f (\xi) & = &
    \displaystyle \sum_{v \in \D} \ \sum_{|w - v| = 1} c_+ \ \ind \{\xi (v) \neq \xi (w) \} \ [f (\xi_{v \leftrightarrow w}) - f (\xi)] \\ && \hspace{60pt} + \
    \displaystyle \sum_{v \in \D} \ \sum_{|w - v| = 1} \ind \{\xi (v)  =   \xi (w) \} \ [f (\xi_{v, w}) - f (\xi)] \end{array} $$
 where configuration $\xi_{v \leftrightarrow w}$ is obtained from $\xi$ by exchanging the contents of vertices $v$ and $w$ while
 configuration $\xi_{v, w}$ is obtained by killing the particles at $v$ and $w$ if they exist.
 Since particles can only annihilate, the probability that a given site is occupied by a particle decreases in time
 so it has a limit when time goes to infinity.
 Since in addition the evolution rules of the process are translation invariant, this limit does not depend on the site under
 consideration:
 $$ \hbox{there exists $a$ such that} \ \ \lim_{t \to \infty} \ P \,(\xi_t (v) = 1) = a \ \ \hbox{for all} \ v \in \D. $$
 In other respects, given any two particles alive at time $t$, at some random time $s > t$ almost surely finite, either one of
 these particles is killed due to a collision with a third particle or both particles annihilate each other.
 The latter follows immediately from the fact that one-dimensional random walks are recurrent while each time two particles are
 distance one apart, there is a positive probability that they annihilate at their next jump when $a_1 = a_2 \neq 1$.
 This implies that the limit $a$ must equal zero, from which we conclude that
 $$ \lim_{t \to \infty} \ P \,(\eta_t (x) \neq \eta_t (y)) \ \leq \
    \lim_{t \to \infty} \ \sum_{i = 1}^{y - x} \ P \,\bigg(\xi_t \bigg(x + i - \frac{1}{2} \bigg) = 1 \bigg) \ = \ 0 $$
 for all $x, y \in \Z$.
 This completes the proof.
\end{proof}


\section{Altruistic-selfish interactions}
\label{sec:invasion}

\indent This section is devoted to the proof of Theorem \ref{th:invasion} which states that, in the presence of altruistic-selfish
 interactions, the selfish type always wins.
 The proof is divided into four steps.
 The first step is to show that, under the assumptions of the theorem, the set of type 2 dominates its counterpart in a process that
 we shall call perturbation of the voter model.
 In particular, in order to establish the theorem, it suffices to prove that type 2 wins for this new process.
 The reason for introducing a perturbation of the voter model is that, contrary to process \eqref{eq:generator-2}, it can be studied
 using duality techniques, and the second step of the proof is to describe its dual process in detail, while the third step is to
 construct selected dual paths that are key to proving that type 2 survives.
 Finally, the fourth step combines these selected dual paths to a block construction in order to prove that not only type 2 survives
 but also type 1 goes extinct.
 Before giving the details of the proof, we note that the third step will be used again in the proof of Theorem \ref{th:reduction}
 while the fourth step in the proof of both Theorems \ref{th:pure-birth} and \ref{th:reduction}, but these two steps are detailed
 in this section only.
 We also point out that Theorem \ref{th:invasion} can be proved without the use of a block construction, but since it is needed in
 the proof of Theorems \ref{th:pure-birth} and \ref{th:reduction}, we follow the same approach for all three theorems.


\subsection*{Coupling with a voter model perturbation.}

\indent We first observe that, under the assumptions of the theorem, there exists a constant $\rho > 0$ fixed from now on such that
\begin{equation}
\label{eq:parameters}
  a_1 \ < \ \frac{1 - \rho}{2} \qquad \hbox{and} \qquad a_2 \ > \ \frac{1 + \rho}{2}
\end{equation}
 in which case the set of type 2 for the spatial model \eqref{eq:generator-2} dominates stochastically its counterpart in a certain
 perturbation of the voter model that we denote later by $(\xi_t)$.
 The dynamics of this voter model perturbation depend on a single parameter:
\begin{equation}
\label{eq:epsilon}
  \ep \ := \ \frac{\rho}{d (1 - \rho) + \rho}
\end{equation}
 and can be described as follows.
 As in the original spatial model, the type at each vertex $x$ is updated at rate one, but the new type is now chosen according to the following rules:
\begin{enumerate}
 \item with probability $1 - \ep > 0$, the new type at vertex $x$ is chosen uniformly at random from the set of the nearest neighbors, \vspace{4pt}
 \item with the residual probability $\ep > 0$, the new type at vertex $x$ is chosen to be type 1 if all the nearest neighbors are of type 1,
  and type 2 otherwise.
\end{enumerate}
 More formally, the dynamics are described by the Markov generator
\begin{equation}
\label{eq:voter}
 \begin{array}{rcl} L_{\xi} f (\xi) & = &
   \displaystyle \sum_{x \in \Z^d} \ [(1 - \ep) \,f_1 (x, \xi) + \ep \ \ind \{f_2 (x, \xi) = 0 \}]
                 \ [f (\xi_{x, 1}) - f (\xi)] \vspace{-4pt} \\ && \hspace{25pt} + \
   \displaystyle \sum_{x \in \Z^d} \ [(1 - \ep) \,f_2 (x, \xi) + \ep \ \ind \{f_2 (x, \xi) \neq 0 \}] \ [f (\xi_{x, 2}) - f (\xi)] \end{array}
\end{equation}
 where configuration $\xi_{x, i}$ is obtained from $\xi$ by assigning type $i$ to vertex $x$ and leaving the type of all the
 other vertices unchanged.
 Then, we have the following result.
\begin{lemma} --
\label{lem:coupling}
 Assume that $\eta_0 \equiv \xi_0$ and \eqref{eq:parameters} holds. Then,
 $$ P \,(\eta_t (x) = 2) \,\geq \,P \,(\xi_t (x) = 2) \quad \hbox{for all} \ x \in \Z^d \ \hbox{and all} \ t > 0. $$
\end{lemma}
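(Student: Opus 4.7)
I would establish the inequality by constructing a monotone coupling of the two processes that preserves the partial order $\xi \leq \eta$ iff $\xi^{-1}(2) \subseteq \eta^{-1}(2)$. Since the initial conditions coincide, in particular $\xi_0 \leq \eta_0$, such a coupling immediately yields $\{x : \xi_t (x) = 2\} \subseteq \{x : \eta_t (x) = 2\}$ almost surely for all $t > 0$, and the lemma follows by taking probabilities. The coupling is built via a standard Harris graphical construction: at each vertex $x$ I would place two independent Poisson clocks (one for each type of flip) with a common pair of uniform marks used to drive both dynamics.

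Preservation of the partial order by the coupled dynamics reduces to two pointwise rate inequalities: for every pair of configurations $\xi \leq \eta$ and every $x \in \Z^d$, the rate at which $x$ flips to type~2 in $\eta$ must be at least that in $\xi$, while the rate at which $x$ flips to type~1 in $\eta$ must be at most that in $\xi$. Because the $\eta$-rate to flip to~$2$ is nondecreasing in $f_2 (x, \cdot)$ and nonincreasing in $f_1 (x, \cdot)$ (the monotonicity of \eqref{eq:proba} already invoked to prove attractiveness in Lemma \ref{lem:drift}), and because $\xi \leq \eta$ forces $f_2 (x, \eta) \geq f_2 (x, \xi)$ and $f_1 (x, \eta) \leq f_1 (x, \xi)$, we may replace $\eta$ by $\xi$ on the left-hand side. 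With $k = f_2 (x, \xi) \in \{0, 1, \ldots, 2d\}$ the flip-to-2 comparison then amounts to the purely algebraic inequality
\begin{equation*}
  \frac{(1 - a_1) \,k}{a_1 \,(2d - k) + (1 - a_1) \,k} \ \geq \ (1 - \ep) \,\frac{k}{2d} + \ep \,\ind \{k > 0\},
\end{equation*}
and the flip-to-1 comparison reduces by the same device to the analogous inequality (with the sense reversed, $a_2$ in place of $a_1$, and $f_1$ playing the role of $f_2$).

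The two algebraic inequalities are trivial at $k = 0$ and $k = 2d$ (both sides equal $0$ and $1$ respectively). For the intermediate values, observe that the left-hand side of the flip-to-2 inequality is a concave function of $k$, while its right-hand side is affine in $k$ on $\{1, \ldots, 2d\}$; the problem thus collapses to the single tight boundary case $k = 1$. A direct computation shows that this tight case holds exactly when $\ep \leq \rho / [d (1 - \rho) + \rho]$, and the choice \eqref{eq:epsilon} is made precisely so that equality is achieved in the extremal regime $a_1 = (1 - \rho)/2$. The flip-to-1 inequality is handled by a completely symmetric convexity argument, with the tight case occurring at $k = 2d - 1$ and the same value of $\ep$.

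The main technical hurdle is identifying the two tight cases and verifying that the value of $\ep$ in \eqref{eq:epsilon} actually makes the inequalities hold there; once this is done, concavity/convexity collapses the remaining configurations to those two checks, and the monotone coupling delivers the conclusion of the lemma in the standard way.
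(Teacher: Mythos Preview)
Your proposal is correct and follows essentially the same route as the paper: compare the flip rates of the two generators configuration by configuration, and verify the resulting algebraic inequalities via concavity of the $\eta$-rate against the affine $\xi$-rate, with the choice \eqref{eq:epsilon} making the endpoint $k=1$ (equivalently $f_2=(2d)^{-1}$) tight. The only cosmetic difference is that the paper dispatches the flip-to-$1$ inequality by the one-line monotonicity bound $(1-\rho)+2\rho f_2\geq (1-\rho)+\rho/d$ rather than a convexity argument, and it leaves the attractiveness reduction $\xi\leq\eta\Rightarrow$ same-configuration comparison implicit; your more explicit version is fine.
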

\begin{proof}
 This follows from certain inequalities between the transition rates.
 When all the neighbors of vertex $x$ are of the same type at the time of an update, then vertex $x$ becomes of this type with
 probability one in both processes.
 Also, given that $x$ is of type 2 and has at least one neighbor of each type, the rate at which it becomes 1 for
 process \eqref{eq:generator-2} is bounded from above by
\begin{equation}
\label{eq:upper-bound}
  \begin{array}{l}
    \displaystyle \frac{(1 - a_2) \,f_1 (x, \eta)}{(1 - a_2) \,f_1 (x, \eta) + a_2 \,f_2 (x, \eta)} \ \leq \
    \displaystyle \frac{(1 - \rho) \,f_1 (x, \eta)}{(1 - \rho) \,f_1 (x, \eta) + (1 + \rho) \,f_2 (x, \eta)} \vspace{10pt} \\ \hspace{60pt} = \
    \displaystyle \frac{(1 - \rho) \,f_1 (x, \eta)}{(1 - \rho) + 2 \rho \,f_2 (x, \eta)} \ \leq \
    \displaystyle \frac{d (1 - \rho)}{d (1 - \rho) + \rho} \ f_1 (x, \eta) \ = \
    \displaystyle (1 - \ep) \,f_1 (x, \eta) \end{array}
\end{equation}
 which is the rate at which it becomes 1 for process \eqref{eq:voter}.
 Similarly, given that $x$ is of type 1 and has at least one neighbor of each type, the rate at which it becomes 2 for
 process \eqref{eq:generator-2} is
\begin{equation}
\label{eq:lower-bound}
  \begin{array}{l}
    \displaystyle \frac{(1 - a_1) \,f_2 (x, \eta)}{a_1 \,f_1 (x, \eta) + (1 - a_1) \,f_2 (x, \eta)} \ \geq \
    \displaystyle \frac{(1 + \rho) \,f_2 (x, \eta)}{(1 - \rho) \,f_1 (x, \eta) + (1 + \rho) \,f_2 (x, \eta)} \vspace{10pt} \\ \hspace{40pt} = \
    \displaystyle \frac{(1 + \rho) \,f_2 (x, \eta)}{(1 - \rho) + 2 \rho \,f_2 (x, \eta)} \ \geq \
    \displaystyle \frac{d (1 - \rho) \,f_2 (x, \eta) + \rho}{d (1 - \rho) + \rho} \ = \ (1 - \ep) \,f_2 (x, \eta) + \ep \end{array}
\end{equation}
 which is the rate at which it becomes 2 for process \eqref{eq:voter}.
 To see that the last inequality is indeed true, we introduce the functions
 $$ g (z) \ = \ \frac{(1 + \rho) \,z}{(1 - \rho) + 2 \rho \,z} \qquad \hbox{and} \qquad
    h (z) \ = \ \frac{d (1 - \rho) \,z + \rho}{d (1 - \rho) + \rho} $$
 and observe that we have the equalities
 $$ g (1) \ = \ h (1) \ = \ 1 \qquad \hbox{and} \qquad g ((2d)^{-1}) \ = \ h ((2d)^{-1}) \ = \ \frac{1 + \rho}{2d (1 - \rho) + 2 \rho}. $$
 Since in addition $g$ is concave on $(0, 1)$ whereas $h$ is linear, the last inequality in \eqref{eq:lower-bound} follows.
 The inequalities on the transition rates in \eqref{eq:upper-bound} and \eqref{eq:lower-bound} give the lemma.
\end{proof}


\subsection*{Duality with branching coalescing random walks.}

\indent In view of Lemma \ref{lem:coupling}, standard coupling arguments -- see Liggett \cite{liggett_1985}, Section II.1, for details
 about coupling techniques -- imply that, to prove the theorem, it suffices to prove that type 2 wins for the process \eqref{eq:voter}.
 Recall that the reason for introducing this process is that, contrary to the original spatial model, it can be studied using duality
 techniques.
 First, we construct the process \eqref{eq:voter} graphically as follows:
\begin{enumerate}
 \item for each oriented edge $e = (x, y) \in \Z^d \times \Z^d$ with $x \sim y$
 \begin{enumerate}
  \item we let $\Lambda (x, y)$ be a Poisson point process with intensity $(2d)^{-1} (1 - \ep)$,
  \item we draw an arrow from vertex $y$ to vertex $x$ at times $t \in \Lambda (x, y)$ to indicate that the individual at $x$
   mimics the individual at $y$.
 \end{enumerate}
 \item for each vertex $x \in \Z^d$
 \begin{enumerate}
  \item we let $\Delta (x)$ be a Poisson point process with intensity $\ep$,
  \item we draw a set of $2d$ arrows from each $y \sim x$ to vertex $x$ at times $t \in \Delta (x)$ to indicate that $x$ becomes
   of type 1 if all its neighbors are of type 1, and of type 2 otherwise.
 \end{enumerate}
\end{enumerate}
 We refer the reader to the left-hand side of Figure \ref{fig:dual} for an illustration of the graphical representation.
 The type at any space-time point can be deduced from the graphical representation and the configuration of the system at earlier
 times by going backwards in time.
 Declare that a dual path from space-time point $(x, T)$ down to $(y, T - s)$ exists, which we write $(x, T) \downarrow (y, T - s)$,
 whenever there are sequences of times and vertices
 $$ s_0 = T - s \ < \ s_1 \ < \ \cdots \ < \ s_{m + 1} = T \qquad \hbox{and} \qquad x_0 = y, \,x_1, \,\ldots, \,x_m = x $$
 such that the following two conditions hold:
\begin{enumerate}
 \item for $i = 1, 2, \ldots, m$, there is an arrow from $x_{i - 1}$ to $x_i$ at time $s_i$ and \vspace{4pt}
 \item for $i = 0, 1, \ldots, m$, there is no arrow that points at the segments $\{x_i \} \times (s_i, s_{i + 1})$.
\end{enumerate}
 The dynamics imply that vertex $x$ is of type 1 at time $T$ if and only if
\begin{equation}
\label{eq:duality}
 \xi_0 (y) = 1 \ \ \hbox{for all} \ \ y \in \Z^d \ \hbox{such that} \ (x, T) \downarrow (y, 0).
\end{equation}
 Note that the set-valued process
  $$ \hat \xi_s (x, T) \ = \ \{y \in \Z^d : (x, T) \downarrow (y, T - s) \} \quad \hbox{for all} \ 0 \leq s \leq T $$
 defines a system of branching coalescing random walks where particles jump at rate $1 - \ep$ to one of their nearest neighbors
 chosen uniformly at random and split at rate $\ep$ into $2d$ particles which are sent to each of their neighbors.
 In addition, whenever two particles are located on the same vertex they coalesce.
 See the left-hand side of Figure \ref{fig:dual} for an illustration where the system of random walks is represented in thick lines.
 This process is reminiscent, though not identical, of the dual process of the biased voter model \cite{bramson_griffeath_1981} and we
 also refer to \cite{athreya_swart_2005, krone_neuhauser_1997} for similar dual processes.


\subsection*{Selected dual paths and random walk estimates.}

\indent To bound the probability that a given space-time point $(x, T)$ is of type 1, we now construct a dual path $(X_t (x))$
 that keeps track of a specific particle in the dual process.
 For each $i = 1, 2, \ldots, d$, we define the hyperplane
 $$ H_i \ := \ \{z = (z_1, z_2, \ldots, z_d) \in \R^d : z_i = 0 \} $$
 as well as the deterministic times $T_i = i \,c_1 N$ where $c_1 = 4 \ep^{-1}$.
 Recall that the constant $\ep$ is defined in equation \eqref{eq:epsilon} above.
 Also, we let $\dist (x, H)$ denote the Euclidean distance between a vertex $x$ and its orthogonal projection on a set $H$.
 The dual path starts at $X_0 (x) = x$ and is defined recursively from the graphical representation as follows.
 For all $t > 0$, define
 $$ s (t) \ = \ \inf \,\{s > t : T - s \in \Delta (X_t (x)) \ \hbox{or} \ T - s \in \Lambda (X_t (x), y) \ \hbox{for some} \ y \sim X_t (x) \}. $$
 For all $s \in (t, s (t))$, we set $X_s (x) = X_t (x)$ while at time $s (t)$ we have the alternative:
\begin{enumerate}
 \item If $T - s (t) \in \Lambda (X_t (x), y)$ for some $y \sim X_t (x)$ then $X_{s (t)} (x) = y$. \vspace{4pt}
 \item If $T - s (t) \in \Delta (X_t (x))$ and $t \in (T_{i - 1}, T_i)$ for some $i = 1, 2, \ldots, d$, then
  $$ X_{s (t)} (x) \sim X_t (x) \quad \hbox{and} \quad \dist (X_{s (t)} (x), H_i) < \dist (X_t (x), H_i). $$
 \item If $T - s (t) \in \Delta (X_t (x))$ and $t > T_d$ then
  $$ P \,(X_{s (t)} (x) = y) \ = \ (2d)^{-1} \quad \hbox{for all} \ y \sim X_t (x). $$
\end{enumerate}
 Note that there is a unique $X_{s (t)} (x)$ that satisfies the second condition above so the dual path is well-defined.
 In words, the dual path travels backwards in time down the graphical representation following the arrows from tip to tail.
 When a set of $2d$ arrows is encountered, the process jumps in the direction that makes it closer to the hyperplane $H_i$ between
 time $T_{i - 1}$ and time $T_i$ while after the last time $T_d$ the process jumps uniformly at random in all directions.
 This, together with the properties of the graphical representation introduced above, implies that the selected dual path
 always jumps at rate one.
 At each jump, with probability $1 - \ep$, the target site is chosen uniformly at random from the set of the nearest neighbors
 whereas, with probability $\ep$, the process moves so as to decrease its distance to $H_1$ until time $T_1$ when
 a similar mechanism operates in the direction of the second axis, and so on.
 In order to later compare the process \eqref{eq:voter} with oriented site percolation, we start by collecting important properties of
 the selected dual path.
 These properties are based on random walk estimates and are given in the following three lemmas.
\begin{lemma} --
\label{lem:go}
 There exist $C_1 < \infty$ and $\gamma_1 > 0$ such that, for all $i = 1, 2, \ldots, d$,
 $$ P \,(\dist (X_{T_i} (x), H_i) > N/2 \ | \,\dist (X_{T_{i - 1}} (x), H_i) \leq 4N) \ \leq \ C_1 \,\exp (- \gamma_1 N) $$
 for all $N$ sufficiently large.
\end{lemma}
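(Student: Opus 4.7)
The plan is to reduce the problem to a one-dimensional calculation by tracking the $i$-th coordinate $Y_t$ of the selected dual path, so that $\dist (X_t (x), H_i) = |Y_t|$. During the interval $(T_{i - 1}, T_i)$, the process $Y_t$ is a mean-reverting continuous-time random walk on $\Z$: from the uniform updates it gets symmetric $\pm 1$ jumps at rate $(1 - \ep)/(2d)$ each, while from the biased updates it gets an extra rate-$\ep$ jump in the direction $- \sign (Y_t)$. Hence $Y_t$ has instantaneous drift $- \ep \,\sign (Y_t)$ when $Y_t \neq 0$, and the time budget $c_1 N = 4 \ep^{-1} N$ is chosen so that the expected drift $\ep \cdot c_1 N = 4 N$ exactly matches the worst starting distance $4 N$, leaving a slack of $N/2$ before the target threshold.

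I would control the tail by splitting along the first passage time $\tau_0 := \inf \{t \geq T_{i - 1} : Y_t = 0 \}$. On $\{\tau_0 > T_i \}$ the sign of $Y_t$ is frozen throughout the interval, and after subtracting its initial value $Y_t - Y_{T_{i - 1}}$ coincides with a compound Poisson random walk $S_t$ with increments $+ 1$ at rate $(1 - \ep)/(2d)$ and $- 1$ at rate $(1 - \ep)/(2d) + \ep$, hence drift $- \ep$ per unit time and step moment generating function satisfying $\log E \,[e^{\lambda \xi_1}] \leq - \ep \lambda + C \lambda^2$ for small $\lambda > 0$. The event $|Y_{T_i}| > N/2$ then forces the deviation $|S_{c_1 N} - E [S_{c_1 N}]|$ to be of order $N$, so a standard Chernoff estimate optimized at $\lambda$ of order $\ep$ yields probability at most $C \,e^{- \gamma N}$.

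On the complementary event $\{\tau_0 \leq T_i \}$, the strong Markov property at $\tau_0$ reduces matters to bounding
$$ P \,(\sup_{0 \leq s \leq c_1 N} |Y_s| > N/2 \ | \ Y_0 = 0). $$
For this I would use the exponential Lyapunov function $V (y) = e^{\alpha |y|}$: a direct computation of the generator $\mathcal{L}$ of $Y$ gives $\mathcal{L} V (y) \leq - c \ep \,V (y)$ when $y \neq 0$ and $\mathcal{L} V (0) \leq K$ for constants $c, K > 0$, provided $\alpha > 0$ is small enough. Dynkin's formula then bounds $E [V (Y_t)]$ by a constant uniformly in $t$, and Markov's inequality combined with a union bound over finitely many time slices yields the required $C \,e^{- \alpha N/2}$ estimate.

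The main obstacle is the degenerate behavior of the dual path at $Y = 0$, where the instruction ``decrease the distance to $H_i$'' is not well defined; this prevents any naive one-sided Skorohod reflection argument. The decomposition at $\tau_0$ cleanly isolates the ``deterministic drift'' phase from the ``equilibrium fluctuation'' phase and handles each with its own random-walk estimate.
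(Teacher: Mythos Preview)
Your argument is correct, and the $\tau_0$-decomposition is a natural way to make the boundary behaviour at $Y=0$ explicit. The paper's own proof is much terser: it records the transition rates of $\dist(X_t(x),H_i)$ on $(T_{i-1},T_i)$, observes that the drift is $-\ep$ so that the expectation starting from $4N$ vanishes at time $T_i$ (since $\ep\cdot c_1 N = 4N$), and then simply invokes ``large deviation estimates for the Poisson distribution and the Binomial distribution together with standard coupling arguments,'' without isolating the hitting time of~$0$ or introducing any Lyapunov function. The boundary issue you flag as the main obstacle is real as a matter of definition but harmless for the upper bound: whatever convention one adopts at $|Y_t|=0$ can only send the distance from $0$ to $1$. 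In particular, on $\{\tau_0\leq T_i\}$ one can replace your Lyapunov--Dynkin step by the one-line gambler's-ruin bound $P(\sup_{t\geq 0} Z_t \geq N/2 \mid Z_0=0)\leq (p/q)^{N/2}$ for the unreflected biased walk with $p=(1-\ep)/(2d)$ and $q=p+\ep$, which is presumably what the paper's ``standard coupling'' hides. Both routes work; yours is more explicit and self-contained, the paper's more economical.
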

\begin{proof}
 First, we observe that for all $t \in (T_{i - 1}, T_i)$ we have
 $$ \dist (X_t (x), H_i) \ \to \ \left\{\hspace{-3pt} \begin{array}{lll}
    \dist (X_t (x), H_i) + 1 & \hbox{at rate} & (2d)^{-1} (1 - \ep) \vspace{4pt} \\
    \dist (X_t (x), H_i) - 1 & \hbox{at rate} & (2d)^{-1} (1 - \ep) + \ep \end{array} \right. $$
 from which we deduce that, for all $t \in (T_{i - 1}, T_i)$,
\begin{equation}
\label{eq:go-1}
  \lim_{h \,\to \,0} \ h^{-1} \,E \,(\dist (X_{t + h} (x), H_i) - \dist (X_t (x), H_i)) \ = \ - \ep.
\end{equation}
 In particular, recalling the definition of $T_i$ and $T_{i - 1}$, we obtain
 $$ E \,(\dist (X_{T_i} (x), H_i) \ | \,\dist (X_{T_{i - 1}} (x), H_i) = 4N) \ = \ 4N - \ep \,(T_i - T_{i - 1}) \ = \ 0. $$
 The lemma then follows from large deviation estimates for the Poisson distribution and the Binomial distribution together with
 standard coupling arguments.
\end{proof}
 \begin{lemma} --
\label{lem:stay}
 For any constant $C > 4d \ep^{-1}$ and all $i = 1, 2, \ldots, d$,
 $$ P \,(\dist (X_t (x), H_i) > N \ \hbox{for some} \ t \in (T_i, C N) \ | \,\dist (X_{T_i} (x), H_i) \leq N/2) \ \leq \ 3 \,\exp (- \sqrt N) $$
 for all $N$ sufficiently large.
\end{lemma}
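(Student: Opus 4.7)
The plan is to reduce the claim to a one-dimensional random-walk large-deviation estimate by isolating the $i$-th coordinate of the selected dual path. The crucial geometric observation is that, after time $T_i$, the drift built into the construction of $X_t(x)$ no longer acts on its $i$-th coordinate: in each interval $(T_{j - 1}, T_j)$ with $j > i$ the branching arrows push the walker toward $H_j = \{z : z_j = 0\}$, a displacement along the single axis $j \neq i$, while after time $T_d$ the branching arrows trigger uniformly random nearest-neighbor jumps; voter-type arrows, which fire at total rate $1 - \ep$, are uniform throughout. Consequently, for $t \geq T_i$, the coordinate $(X_t(x))_i$ evolves as a continuous-time symmetric simple random walk on $\Z$ with jump rate bounded above by $1/d$, regardless of the behavior of the other coordinates.

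With this reduction in hand, set $Y_s := (X_{T_i + s}(x))_i - (X_{T_i}(x))_i$, a symmetric integer-valued random walk starting at zero. On the conditioning event $\{\dist (X_{T_i}(x), H_i) \leq N/2\}$ we have $|(X_{T_i}(x))_i| \leq N/2$, so the event of interest $\{\dist (X_t(x), H_i) > N \text{ for some } t \in (T_i, CN)\}$ is contained in $\{\max_{0 \leq s \leq CN} |Y_s| > N/2\}$, and it suffices to bound the latter.

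To control this maximum I would apply Doob's submartingale inequality to the exponential submartingale $e^{\lambda Y_s}$ together with the explicit moment generating function $E[e^{\lambda Y_s}] = \exp (r s (\cosh \lambda - 1))$ of the rate-$r$ symmetric walk. Choosing $\lambda \sim 1/\sqrt N$ and using $\cosh \lambda - 1 \leq \lambda^2$ one obtains
\begin{equation*}
 P \Bigl(\,\max_{0 \leq s \leq CN} |Y_s| \geq N/2 \Bigr) \ \leq \ 2 \,e^{- \lambda N/2} \,E [e^{\lambda Y_{CN}}] \ \leq \ 2 \exp \bigl(- \sqrt N + O(1) \bigr),
\end{equation*}
the $O(1)$ term arising because $\lambda^2 r \cdot CN = O(1)$. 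For $N$ sufficiently large the bounded constant is absorbed into the prefactor, yielding the announced estimate with factor $3$.

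The only substantive step is the coordinate-wise decoupling at the start: once one notices that the drift directions $e_{i + 1}, \ldots, e_d$ used to steer the selected dual path are all orthogonal to $e_i$, so that $(X_t(x))_i$ remains a symmetric random walk for $t \geq T_i$, the remainder is a textbook exponential Chebyshev calculation.
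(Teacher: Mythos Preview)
Your argument is correct and rests on the same core idea as the paper's: after time $T_i$ the $i$th coordinate of the selected dual path is a one-dimensional symmetric random walk, and an exponential Chebyshev bound with parameter of order $1/\sqrt N$ controls its maximal excursion over a time window of order $N$. The packaging differs slightly. The paper first bounds the number $J_{CN}$ of jumps of the $i$th coordinate by a Poisson large-deviation estimate, and then applies the reflection principle together with Chebyshev's inequality to the embedded discrete-time walk; you instead apply Doob's maximal inequality directly to the continuous-time submartingale $e^{\lambda Y_s}$, which is a little cleaner and avoids the two-step decomposition. Your observation that the jump rate of the $i$th coordinate is at most $1/d$ (it equals $(1-\ep)/d$ on $(T_i,T_d)$ and $1/d$ afterwards) is exactly what is needed to bound the moment generating function uniformly.

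One small point to tighten: your final inequality ``$2\exp(-\sqrt N + O(1)) \leq 3\exp(-\sqrt N)$ for $N$ large'' does not follow as written, since the $O(1)$ term $\lambda^2 r\,CN$ with $\lambda = 2/\sqrt N$ is a fixed constant depending on $C$ and is not absorbed by taking $N$ large. The remedy is to choose $\lambda$ a bit larger, e.g.\ $\lambda = 4/\sqrt N$ as the paper does, so that the leading term becomes $-2\sqrt N$ and the additive constant is dominated by the spare $\sqrt N$ for $N$ sufficiently large.
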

\begin{proof}
 Denote by $(Z_n)$ the discrete-time one-dimensional simple symmetric random walk and introduce the discrete random variable $J_s$
 that counts the number of times the $i$th coordinate of the process $(X_t (x))$ jumps by time $s$.
 Since the $i$th coordinate evolves according to the continuous-time simple symmetric random walk run at rate $d^{-1} (1 - \ep)$ after
 time $T_i$, we have
 $$ \begin{array}{l}
  P \,(\dist (X_t (x), H_i) > N \ \hbox{for some} \ t \in (T_i, C N) \ | \,\dist (X_{T_i} (x), H_i) \leq N/2) \vspace{4pt} \\ \hspace{25pt} \leq \
  P \,(J_{CN} > N_0) \ + \ P \,(Z_n \notin (-N, N) \ \hbox{for some} \ n \leq N_0 \ | \ Z_0 \in (- N / 2, N / 2)) \end{array} $$
 for all $N_0 > 0$.
 Taking $N_0 = d^{-1} CN$, large deviation estimates for the Poisson distribution imply that the first term on the right-hand side can
 be bounded by
 $$ P \,(J_{CN} > N_0) \ \leq \ P \,(J_{CN} > d^{-1} (1 - \ep) \,CN) \ \leq \ C_2 \,\exp (- \gamma_2 N) \ \leq \ \exp (- \sqrt N) $$
 for suitable constants $C_2 < \infty$ and $\gamma_2 > 0$, and all $N$ sufficiently large.
 To estimate the second term, we use the reflection principle and Chebyshev's inequality.
 For all $\theta > 0$, we have
 $$ \begin{array}{l}
  P \,(Z_n \notin (-N, N) \ \hbox{for some} \ n \leq N_0 \ | \ Z_0 \in (- N / 2, N / 2)) \vspace{4pt} \\ \hspace{20pt} \leq \
  P \,(Z_n \notin (-N / 2, N / 2) \ \hbox{for some} \ n \leq N_0 \ | \ Z_0 = 0) \vspace{4pt} \\ \hspace{20pt} \leq \
  2 \ P \,(Z_{N_0} \notin (-N / 2, N / 2) \ | \ Z_0 = 0) \ \leq \
  2 \,\exp (- \theta N / 2) \ E \,(\exp (\theta Z_{N_0}) \ | \ Z_0 = 0) \vspace{4pt} \\ \hspace{20pt} \leq \
  2 \,\exp (- \theta N / 2) \ [E \,(\exp (\theta Z_1) \ | \ Z_0 = 0)]^{N_0} \ \leq \
  2 \,\exp (- \theta N / 2 + N_0 \ln \phi (\theta)) \end{array} $$
 where the function $\phi (\theta)$ is the moment generating function of $Z_1$. Since
 $$ \ln \phi (\theta) \ = \ \ln \bigg(\frac{e^{\theta} + e^{- \theta}}{2} \bigg) \ = \
    \ln \bigg(1 + \frac{\theta^2}{2} + o (\theta^2) \bigg) \ \leq \ \theta^2 $$
 when $\theta > 0$ is small enough, taking $\theta = 4 / \sqrt N$, we can conclude that
 $$ \begin{array}{l}
  P \,(Z_n \notin (-N, N) \ \hbox{for some} \ n \leq N_0 \ | \ Z_0 \in (- N / 2, N / 2)) \vspace{4pt} \\ \hspace{20pt} \leq \
  2 \,\exp (- \theta N / 2 + N_0 \ln \phi (\theta)) \ \leq \ 2 \,\exp (- 2 \sqrt N + 16 d^{-1} C) \ \leq \ 2 \,\exp (- \sqrt N) \end{array} $$
 for all $N$ sufficiently large.
 This completes the proof.
\end{proof}
\begin{lemma} --
\label{lem:target}
 Let $C_3 > 4 d \ep^{-1}$ and $x \in (- 2N, 2N]^d$.
 Then, for all $N$ large,
 $$ P \,(X_t (x) \notin (- 4N, 4N]^d \ \hbox{for some} \ t < C_3 N \ \hbox{or} \ X_{C_3 N} (x) \notin (- N, N]^d) \ \leq \ 7d \,\exp (- \sqrt N). $$
\end{lemma}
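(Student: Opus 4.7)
The strategy is to control each coordinate of $X_t(x)$ separately and combine the estimates by a union bound over $d$ coordinates. Fix $i \in \{1, 2, \ldots, d\}$ and let $X_t^i(x)$ denote the $i$-th coordinate of $X_t(x)$, so that $|X_t^i(x)| = \dist(X_t(x), H_i)$. Since $x \in (-2N, 2N]^d$ we have $|X_0^i(x)| \le 2N$, and it suffices to prove that, with probability at least $1 - 7 \exp(-\sqrt N)$ for $N$ large, the coordinate $X_t^i(x)$ stays in $(-4N, 4N]$ for every $t < C_3 N$ and lies in $(-N, N]$ at $t = C_3 N$.

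For each coordinate $i$, the interval $[0, C_3 N]$ splits naturally into three pieces: a pre-drift piece $[0, T_{i-1}]$ on which $X_t^i$ is a symmetric rate-$d^{-1}(1-\ep)$ random walk, a drift piece $[T_{i-1}, T_i]$ on which it additionally feels a drift of $-\ep$ toward $H_i$, and a post-drift piece $[T_i, C_3 N]$ on which it is again symmetric. Lemma \ref{lem:go} applied to the drift piece yields $|X_{T_i}^i| \le N/2$ with probability at least $1 - C_1 \exp(-\gamma_1 N) \ge 1 - \exp(-\sqrt N)$ for $N$ large, and Lemma \ref{lem:stay} applied with $C = C_3$ gives, conditionally on $|X_{T_i}^i| \le N/2$, that $|X_t^i| \le N$ throughout $[T_i, C_3 N]$ with probability at least $1 - 3 \exp(-\sqrt N)$. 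What remains is to rule out excursions of $|X_t^i|$ beyond $4N$ on $[0, T_i]$: on $[0, T_{i-1}]$ the walk is symmetric and on $[T_{i-1}, T_i]$ its drift only reduces $|X_t^i|$ (as soon as $|X_t^i| \ge 1$), so by a pathwise coupling the modulus $|X_t^i|$ on $[0, T_i]$ is dominated by the modulus of a symmetric rate-one random walk started from $|X_0^i| \le 2N$. The reflection principle and Chebyshev estimate used in the proof of Lemma \ref{lem:stay}, applied with the moment generating function $\phi(\theta) = \cosh(\theta)$ and $\theta = 4/\sqrt N$, then bounds the probability of such an excursion by $3 \exp(-\sqrt N)$.

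Adding the three contributions per coordinate gives $\exp(-\sqrt N) + 3 \exp(-\sqrt N) + 3 \exp(-\sqrt N) = 7 \exp(-\sqrt N)$, and summing over the $d$ coordinates yields the claimed total bound of $7d \exp(-\sqrt N)$. The only technical subtlety is the coupling used in the pre-drift/drift excursion estimate: one must verify that the drift component during $[T_{i-1}, T_i]$ cannot enlarge $|X_t^i|$ except possibly at the single value $X_t^i = 0$, which is irrelevant to the event $\{|X_t^i| > 4N\}$. Once this is in hand, the rest of the argument just assembles Lemmas \ref{lem:go} and \ref{lem:stay} and adapts the excursion estimate from the proof of Lemma \ref{lem:stay} to the slightly longer time window $[0, T_i]$.
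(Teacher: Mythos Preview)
Your argument is correct and follows essentially the same route as the paper: a coordinate-by-coordinate decomposition combining the excursion bound via stochastic domination by a symmetric walk, Lemma~\ref{lem:go} on the drift window, and Lemma~\ref{lem:stay} on the post-drift window, then a union bound over the $d$ coordinates. The only imprecision is that you invoke Lemma~\ref{lem:go} as an unconditional bound on $|X_{T_i}^i|$, whereas it is stated conditionally on $|X_{T_{i-1}}^i|\le 4N$; since your excursion estimate already supplies this condition with probability at least $1-3\exp(-\sqrt N)$, the union bound still closes with the same $7d\exp(-\sqrt N)$, and the paper handles this in the same way (its inequality~\eqref{eq:walk-2} is explicitly conditioned on no excursion).
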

\begin{proof}
 Since $\dist (X_t (x), H_i)$ is stochastically smaller than the rate one simple symmetric random walk on the set of nonnegative integers
 with a reflecting boundary at zero and starting from the same initial state, the proof of Lemma \ref{lem:stay} directly implies that
\begin{equation}
\label{eq:walk-1}
  P \,(\dist (X_t (x), H_i) > 4N \ \hbox{for some} \ t < C_3 N) \ \leq \ 3 \,\exp (- \sqrt{4N})
\end{equation}
 for all $N$ large and all $i = 1, 2, \ldots, d$.
 Moreover, by Lemmas \ref{lem:go} and \ref{lem:stay},
\begin{equation}
\label{eq:walk-2}
 \begin{array}{l}
  P \,(\dist (X_{C_3 N} (x), H_i) > N \ | \,\dist (X_t (x), H_i) \leq 4N \ \hbox{for all} \ t < C_3 N) \vspace{4pt} \\ \hspace{20pt} \leq \
  P \,(\dist (X_{T_i} (x), H_i) > N/2 \ | \,\dist (X_{T_{i - 1}} (x), H_i) \leq 4N) \vspace{4pt} \\ \hspace{40pt} + \
  P \,(\dist (X_t (x), H_i) > N \ \hbox{for some} \ t \in (T_i, C_3 N) \ | \,\dist (X_{T_i} (x), H_i) \leq N/2) \vspace{4pt} \\ \hspace{20pt} \leq \
  C_1 \,\exp (- \gamma_1 N) \ + \ 3 \,\exp (- \sqrt N) \end{array}
\end{equation}
 for all $N$ large and all $i = 1, 2, \ldots, d$.
 Finally, combining \eqref{eq:walk-1} and \eqref{eq:walk-2}, we conclude that
 $$ \begin{array}{l}
  P \,(X_t (x) \notin (- 4N, 4N]^d \ \hbox{for some} \ t < C_3 N \ \hbox{or} \ X_{C_3 N} (x) \notin (- N, N]^d) \vspace{4pt} \\ \hspace{20pt} \leq \
  P \,(X_t (x) \notin (- 4N, 4N]^d \ \hbox{for some} \ t < C_3 N) \vspace{4pt} \\ \hspace{40pt} + \
  P \,(X_{C_3 N} (x) \notin (- N, N]^d \ \hbox{and} \ X_t (x) \in (- 4N, 4N]^d \ \hbox{for all} \ t < C_3 N) \vspace{4pt} \\ \hspace{20pt} \leq \
  3d \,\exp (- \sqrt{4N}) \ + \ d \,C_1 \,\exp (- \gamma_1 N) \ + \ 3d \,\exp (- \sqrt N) \ \leq \ 7d \,\exp (- \sqrt N) \end{array} $$
 for all $N$ sufficiently large.
 This completes the proof.
\end{proof}


\subsection*{Block construction.}

\indent To complete the proof of Theorem \ref{th:invasion}, the last step is to use a rescaling argument, a technique also known as
 block construction, which has been introduced by Bramson and Durrett \cite{bramson_durrett_1988}.
 The general idea of the block construction is to couple certain good events related to the interacting particle system properly
 rescaled in space and time with the set of open sites of an oriented site percolation process on the lattice
 $$ \mathcal H \ = \ \{(z, n) = (z_1, \ldots, z_d, n) \in \Z^d \times \Z : z_1 + \ldots + z_d + n \ \hbox{is even and} \ n \geq 0 \}. $$
 We refer to Section 4 in Durrett \cite{durrett_1995} for more details about this technique and a rigorous definition of
 oriented site percolation.
 Lemmas \ref{lem:go}-\ref{lem:target} give us the appropriate space and time scales to compare the process \eqref{eq:voter} with
 oriented percolation.
 More precisely, we let $T = C_3 N$ where the constant $C_3$ is as in Lemma \ref{lem:target}, and declare $(z, n) \in \mathcal H$ to be good
 whenever the event
 $$ \Omega (z, n) \ = \ \{\xi_{nT} (x) = 2 \ \hbox{for all} \ x \in N z + (- N, N]^d \cap \Z^d \} $$
 occurs.
 Then, we have the following lemma.
\begin{lemma} --
\label{lem:invasion}
 For all $N$ sufficiently large,
 $$ \begin{array}{l}
  P \,(X_t (x) \notin (- 4N, 4N]^d \ \hbox{for some} \ (x, t) \in (- 2N, 2N]^d \times (0, T)) \vspace{4pt} \\ \hspace{50pt} + \
  P \,((\pm e_i, 1) \ \hbox{is not good for some} \ i = 1, 2, \ldots, d \ | \ (0, 0) \ \hbox{is good}) \vspace{4pt} \\ \hspace{150pt} \leq \
  7 d \,((4N)^d + 1) \,\exp (- \sqrt N) \end{array} $$
 where $e_i$ denotes the $i$th $d$-dimensional unit vector.
\end{lemma}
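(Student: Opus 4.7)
The plan is to handle each summand by a union bound, invoking the random walk estimate of Lemma~\ref{lem:target} in both cases and coupling it with the duality relation~\eqref{eq:duality} in the second. The common observation is that $T = C_3 N$, so Lemma~\ref{lem:target} applies, without any rescaling, to the selected dual path starting from any vertex in $(-2N, 2N]^d \cap \Z^d$.

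For the first summand I would take a direct union bound over the $(4N)^d$ lattice points in $(-2N, 2N]^d \cap \Z^d$: for each such $x$, Lemma~\ref{lem:target} bounds the probability that the dual path $(X_t(x))$ exits $(-4N, 4N]^d$ before time $T$ by $7d \exp(-\sqrt N)$, and summing over starting vertices produces the contribution of order $(4N)^d \cdot 7d \exp(-\sqrt N)$.

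For the second summand I would use duality. Assume $(0, 0)$ is good, so that $\xi_0 \equiv 2$ on $(-N, N]^d \cap \Z^d$. Fix $i$, a sign, and a target vertex $x \in N(\pm e_i) + (-N, N]^d \cap \Z^d$. Since $(-N, N] \pm N \subseteq (-2N, 2N]$, such an $x$ lies in $(-2N, 2N]^d$, so Lemma~\ref{lem:target} gives $X_T(x) \in (-N, N]^d$ with probability at least $1 - 7d \exp(-\sqrt N)$. On that event we have $\xi_0(X_T(x)) = 2$, and since $(X_s(x))_{0 \leq s \leq T}$ realizes an admissible backward dual path from $(x, T)$ down to $(X_T(x), 0)$, relation~\eqref{eq:duality} forces $\xi_T(x) = 2$. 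A union bound over the $2d$ neighboring blocks and the $(2N)^d$ target vertices inside each, together with the elementary inequality $2d(2N)^d \leq (4N)^d$ valid for every $d \geq 1$, packs the second summand into the stated form.

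The main --- in fact only --- conceptual point is the observation that a single well-chosen dual particle already suffices to certify $\xi_T(x) = 2$, as soon as that particle lands in the type 2 region of $\xi_0$; the many other dual paths that could compete for $x$ via the branchings at times of $\Delta$ are simply discarded. This is precisely why the selected dual paths were engineered in Lemmas~\ref{lem:go}--\ref{lem:target} with a drift toward the successive hyperplanes $H_i$: without that built-in bias, the backward random walks $(X_t(x))$ would spread diffusively on a scale of order $\sqrt{C_3 N}$ and the probability of landing in $(-N, N]^d$ at time $T$ would be too small to close the block construction.
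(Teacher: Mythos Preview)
Your approach is correct and essentially identical to the paper's: both bound the second summand by observing that the selected path $(X_s(x))$ is itself a dual path, so $\xi_0(X_T(x))=2$ forces $\xi_T(x)=2$ via \eqref{eq:duality}, and then take a union bound over the target vertices in $(-2N,2N]^d$ using Lemma~\ref{lem:target}. The only cosmetic discrepancy is that your union bound on the first summand produces $(4N)^d\cdot 7d\exp(-\sqrt N)$ rather than the single $7d\exp(-\sqrt N)$ that accounts for the ``$+1$'' in the stated constant (the paper's own proof only records the per-$x$ bound there), but this is immaterial since the application only needs both terms to vanish as $N\to\infty$.
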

\begin{proof}
 By construction, there exists a dual path $(x, T) \downarrow (X_T (x), 0)$.
 Therefore, Lemma \ref{lem:target} and the duality relationship \eqref{eq:duality} imply that
 $$ \begin{array}{l}
  P \,((\pm e_i, 1) \ \hbox{is not good for some} \ i = 1, 2, \ldots, d \ | \ (0, 0) \ \hbox{is good}) \vspace{4pt} \\ \hspace{20pt} = \
  P \,(\xi_T (x) = 1 \ \hbox{for some} \ x \in (- 2N, 2N]^d \ | \ \xi_0 (y) = 2 \ \hbox{for all} \ y \in (-N, N]^d) \vspace{4pt} \\ \hspace{20pt} \leq \
  P \,(\xi_0 (X_T (x)) = 1 \ \hbox{for some} \ x \in (- 2N, 2N]^d \ | \ \xi_0 (y) = 2 \ \hbox{for all} \ y \in (-N, N]^d) \vspace{4pt} \\ \hspace{20pt} \leq \
  (4N)^d \,\sup_{\,x \in (- 2N, 2N]^d} \ P \,(\xi_0 (X_T (x)) = 1 \ | \ \xi_0 (y) = 2 \ \hbox{for all} \ y \in (-N, N]^d) \vspace{4pt} \\ \hspace{20pt} \leq \
  (4N)^d \,\sup_{\,x \in (- 2N, 2N]^d} \ P \,(X_T (x) \notin (- N, N]^d) \ \leq \ 7d \,(4N)^d \,\exp (- \sqrt N) \end{array} $$
 for all $N$ large.
 Lemma \ref{lem:target} also implies that for all $x \in (- 2N, 2N]^d$ we have
 $$ P \,(X_t (x) \notin (- 4N, 4N]^d \ \hbox{for some} \ t \in (0, T)) \ \leq \ 7d \,\exp (- \sqrt N) $$
 for all $N$ sufficiently large.
 The lemma follows.
\end{proof} \\ \\
 The right-hand side of the inequality in the statement of Lemma \ref{lem:invasion} tends to zero, therefore both probabilities on the
 left-hand side tend to zero, as $N$ tends to infinity.
 Convergence to zero of the second probability together with the translation invariance of the evolution rules of the process in space
 and time imply that, for all $\gamma > 0$, there exists $N$ large such that
\begin{equation}
\label{eq:good}
 P \,((z \pm e_i, n + 1) \ \hbox{is good for all} \ i = 1, 2, \ldots, d \ | \ (z, n) \ \hbox{is good}) \ \geq \ 1 - \gamma.
\end{equation}
 Convergence to zero of the first probability in the statement of the lemma implies that, with probability close to one when $N$ is
 large, the good event in \eqref{eq:good} only depends on the realization of the graphical representation in a finite space-time box
 which, together with \eqref{eq:good}, implies the existence of events $G_{z, n}$ such that
\begin{enumerate}
 \item $G_{z, n}$ is measurable with respect to the graphical representation in
  $$ \{N z + (- 4 N, 4 N]^d \} \times [nT, (n + 1) T] \quad \hbox{for all} \ \ (z, n) \in \mathcal H, $$
 \item For all $\gamma > 0$ there exists $N$ large such that $P \,(G_{z, n}) \geq 1 - \gamma$, \vspace{4pt}
 \item We have the inclusions of events
  $$ G_{z, n} \ \cap \ \Omega (z, n) \ \subset \ \Omega (z \pm e_i, n + 1) \quad \hbox{for all} \ \ i = 1, 2, \ldots, d. $$
\end{enumerate}
 These three conditions are the assumptions of Theorem 4.3 in Durrett \cite{durrett_1995}, from which we deduce the
 existence of a coupling such that the set of good sites contains the set of wet sites of a certain oriented site percolation
 process with parameter $1 - \gamma$ and finite range of dependency when the scale parameter $N$ is large.
 Since such a percolation process is supercritical when $\gamma$ is small enough, starting with infinitely many vertices
 of type 2, we have
 $$ \liminf_{t \to \infty} \ P \,(\xi_t (x) = 2) \ > \ 0 \quad \hbox{for all} \ x \in \Z^d. $$
 This only proves survival of the type 2 individuals, and the last step of the proof is to also establish extinction of the type 1
 individuals, which is not obvious since oriented site percolation has a positive density of closed sites.
 The existence of an in-all-directions expanding region void of type~1 is proved in the following lemma which we will apply again
 to prove Theorems \ref{th:pure-birth} and \ref{th:reduction}.
\begin{lemma} --
\label{lem:dry}
 For all $x \in \Z^d$ we have $P \,(\xi_t (x) = 1) \to 0$ as time $t \to \infty$.
\end{lemma}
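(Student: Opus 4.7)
The plan is to combine the block construction of Lemma~\ref{lem:invasion} with a warm-up step that manufactures good blocks from arbitrary initial configurations, and then push the density of type~2 to one by letting the block scale $N$ go to infinity.

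\emph{Warm-up.} The initial configuration $\xi_0$ contains infinitely many type~2 vertices, so one can pick a widely spaced infinite subset $y_1, y_2, \ldots$ of them. Around each $y_k$, the $\ep$-biased step flips type~1 neighbors of a type~2 vertex to type~2 at rate at least $\ep$, so the set of type~2 vertices in a neighborhood of $y_k$ dominates a supercritical growth process. For any fixed $N$, this yields a positive probability $c_N > 0$ that the block $N y_k + (-N, N]^d$ is entirely of type~2 at some fixed time $T_0 = T_0(N)$. Choosing the $y_k$'s so that the space-time boxes associated with different $y_k$'s in the graphical representation up to time $T_0$ are disjoint, the corresponding success events are independent, and a Borel--Cantelli argument produces infinitely many good blocks at time $T_0$ almost surely.

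\emph{Propagation and limit.} Once infinitely many good blocks exist at time $T_0$, Lemma~\ref{lem:invasion} together with Theorem~4.3 of Durrett \cite{durrett_1995} provides a coupling in which the set of good sites at levels $n \geq 1$ (counted from $T_0$) dominates the wet sites of a 1-dependent supercritical oriented site percolation with parameter $p_N = 1 - \gamma_N$, where $\gamma_N \to 0$ as $N \to \infty$. Starting from infinitely many wet seeds, standard results on supercritical oriented percolation imply that the density of wet sites at any fixed site of $\mathcal H$ converges, as the level $n \to \infty$, to the density $\theta(p_N)$ of the upper invariant measure, and $\theta(p_N) \to 1$ as $p_N \to 1$. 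Hence for every $N$,
$$ \liminf_{t \to \infty} \, P \,(\xi_t (x) = 2) \ \geq \ \theta (p_N), $$
and letting $N \to \infty$ gives $P \,(\xi_t (x) = 1) \to 0$.

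The main obstacle is the warm-up step: one has to quantify $c_N$ and choose $T_0 (N)$ so that the probability of a single type~2 vertex growing into a solid block of side $2N$ by time $T_0$ remains positive, even though it may be very small for large $N$. A secondary subtlety is that the seed good blocks obtained at time $T_0$ need not lie on the even/odd sublattice used in the block construction, so the coupling with oriented percolation has to be applied after a small space-time shift, and the resulting finite-range dependencies must be absorbed into the constants.
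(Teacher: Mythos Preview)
Your approach is valid in outline but takes a genuinely different route from the paper. The paper fixes a single large scale $N$ and then invokes the contour argument of Durrett \cite{durrett_1992}: it upgrades the good event to $\bar\Omega(z,n)$ (type~2 throughout the whole interval $[nT,(n+1)T]$), observes that the presence of a type~1 in a block forces a \emph{generalized dry path} back to level~0, and then shows via a Peierls-type estimate that such dry clusters are finite when $\gamma$ is small. This yields an in-all-directions expanding region of good sites and hence extinction of type~1 at every fixed vertex. No limit in $N$ is taken.

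Your argument instead lets $N\to\infty$ and uses that the wet-site density tends to~$\theta(p_N)\to 1$. This avoids the dry-path machinery, but it imports a different nontrivial input: the complete convergence theorem for (finite-range dependent) supercritical oriented percolation started from an arbitrary infinite seed set, which you label ``standard''. That result is available (via reduction to the i.i.d.\ case and the shape theorem), so the strategy works, but it is not lighter than the paper's contour argument. Two points to tighten: (i) you cite Lemma~\ref{lem:invasion}, which uses the single-time event $\Omega(z,n)$; to control $P(\xi_t(x)=1)$ at \emph{all} times $t$ rather than only at multiples of $T$, you need the interval event $\bar\Omega(z,n)$ exactly as the paper does, and you should note that the same random-walk estimates give the analogous bound; (ii) the warm-up step is indeed needed, but note that the paper's dry-path argument also implicitly requires infinitely many good seeds at some level --- so this is not an extra burden peculiar to your route. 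Your identification of the sublattice-alignment issue is correct and is handled the way you indicate.
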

\begin{proof}
 The proof follows an idea of Durrett \cite{durrett_1992} which relies on the lack of percolation of the dry sites, where a site in
 the percolation process is said to be dry if it is not wet.
 First, we define the oriented graphs $\mathcal G_1$ and $\mathcal G_2$ with common vertex set $\mathcal H$ and respective edge sets
 $$ \begin{array}{rcl}
     E_1 & = & \{((z_1, n_1), (z_2, n_2)) \in \mathcal H \times \mathcal H : z_1 - z_2 \in \{\pm e_1, \ldots, \pm e_d \} \ \hbox{and} \ n_1 = n_2 - 1 \} \vspace{4pt} \\
     E_2 & = & E_1 \,\cup \,\{((z_1, n_1), (z_2, n_2)) \in \mathcal H \times \mathcal H : z_1 - z_2 \in \{\pm 2 e_1, \ldots, \pm 2 e_d \} \ \hbox{and} \ n_1 = n_2 \}. \end{array} $$
 Given a realization of the percolation process in which sites are open with probability $1 - \gamma$, we say that there is a
 regular dry path from $(0, 0)$ to $(z, n)$ if there is a sequence
 $$ (z_0, n_0) = (0, 0), \ (z_1, n_1), \ \ldots, \ (z_k, n_k) = (z, n) \in \mathcal H $$
 such that the following two conditions hold:
\begin{enumerate}
 \item $((z_i, n_i), (z_{i + 1}, n_{i + 1})) \in E_1$ for all $i = 0, 1, \ldots, k - 1$ and \vspace{4pt}
 \item the site $(z_i, n_i)$ is dry for all $i = 0, 1, \ldots, k$.
\end{enumerate}
 Extending condition 1 by replacing $E_1$ with the larger edge set $E_2$, we call the sequence a generalized dry path.
 Assuming that initially only site $(0, 0)$ is closed, we introduce
\begin{enumerate}
 \item $D_0 (\mathcal G_1)$ = sites that can be reached from the origin by a regular dry path and \vspace{4pt}
 \item $D_0 (\mathcal G_2)$ = sites that can be reached from the origin by a generalized dry path.
\end{enumerate}
 We call these sets the regular dry cluster and the generalized dry cluster, respectively.
 Returning to the particle system, in order to prove extinction of type 1, we first modify our definition of good sites: a site
 $(z, n) \in \mathcal H$ is now said to be good if the event
 $$ \bar \Omega (z, n) \ = \ \{\xi_s (x) = 2 \ \hbox{for all} \ s \in [nT, (n + 1) T] \ \hbox{and all} \ x \in N z + (- N, N]^d \cap \Z^d \} $$
 occurs.
 The same arguments as in the proofs of Lemmas \ref{lem:go}-\ref{lem:invasion} imply that
 $$ P \,((\pm e_i, 1) \ \hbox{is not good for some} \ i = 1, 2, \ldots, d \ | \ (0, 0) \ \hbox{is good}) \ \leq \ C_4 \,N^d \,\exp (- \sqrt N) $$
 for some suitable $C_4 < \infty$, thus there again exists a coupling such that the set of good sites contains the set of wet sites of
 an oriented site percolation process with parameter $1 - \gamma$ provided the parameter $N$ is large.
 The reason for introducing this new definition of good sites is that the presence of a type 1 individual at vertex $x \in N z + (- N, N]^d$
 at time $nT$ now implies the existence of a generalized dry path from some $(z_0, 0)$ to $(z, n)$.
 In particular, the problem of extinction reduces to proving the lack of percolation of the generalized dry sites when $\gamma > 0$ is
 small enough.
 To do this, the first step is to prove the existence of $C_5 < \infty$ and $\gamma_5 > 0$ such that
\begin{equation}
\label{eq:dry-1}
  P \,(D_0 (\mathcal G_2) \not \subset ([- m, m]^d \times \R) \cap \mathcal H) \ \leq \ C_5 \,\exp (- \gamma_5 m) \quad \hbox{for $\gamma > 0$ small}.
\end{equation}
 This is essentially Theorem 5 in Durrett \cite{durrett_1992} which states the result for the regular dry cluster but his proof
 easily extends to our case.
 To begin with, we define the collection of cubes
 $$ \begin{array}{l}
      Q \ = \ \{r \in \R^{d + 1} : \sum_{i = 1, 2, \ldots, d + 1} |r_i| \leq 1 \} \quad \hbox{and} \quad
      Q (z, n) \ = \ (z, n) + Q \quad \hbox{for} \ (z, n) \in \mathcal H \end{array} $$
 and orient the faces of the cubes by assigning the value $+1$ to the $d + 1$ faces at the top of each cube, and the value $-1$ to
 the $d + 1$ faces at the bottom.
 Then, we define the contour of the regular and generalized dry clusters as the algebraic sum of all the faces of the cubes that make up
\begin{equation}
\label{eq:dry-2}
 \mathcal D_0 (\mathcal G_i) \ = \bigcup \ \{Q (z, n) : (z, n) \in D_0 (\mathcal G_i) \} \quad \hbox{for} \ i = 1, 2.
\end{equation}
 To prove \eqref{eq:dry-1}, the first ingredient is to observe that each contour has an equal number of plus and minus faces and sites below
 the plus faces must be closed.
 This holds regardless of the geometry of the underlying oriented graph so Lemma 4 in \cite{durrett_1992} directly applies:
 if the contour has length $m$ then there is a set of at least $m \,(2 (d + 1))^{-1}$ sites that must be closed.
 The second ingredient is given by Lemmas 5-6 in \cite{durrett_1992} which state that there exist $C_6 < \infty$
 and $\mu > 0$ such that the number of contours with $m$ faces is bounded by $C_6 \,\mu^m$.
 The result is proved for the regular dry cluster but again it is straightforward to extend the proof to the generalized dry cluster.
 To see this, the key is simply to observe that any cube that makes up the dry region \eqref{eq:dry-2} associated with the generalized
 dry cluster has at least one corner in common with another such cube.
 Then, following the lines of the proof of Theorem 5 in \cite{durrett_1992} and using the previous two estimates, we obtain
 $$ P \,(D_0 (\mathcal G_2) \not \subset ([- m, m]^d \times \R) \cap \mathcal H) \ \leq \ \sum_{k = m}^{\infty} \ C_6 \,\mu^k \ \gamma^{\,k \,(2 (d + 1))^{-1}} $$
 from which \eqref{eq:dry-1} follows for $\gamma < \mu^{- 2 (d + 1)}$.
 To deduce from \eqref{eq:dry-1} the lack of percolation of the dry sites, we denote by $W_n$ and $\bar W_n$ the set of wet sites at
 level $n$ in the coupled percolation processes having the same sets of open sites except at level 0 where
 $$ W_0 \ = \ \{0 \} \quad \hbox{and} \quad \bar W_0 \ = \ \{z \in \Z^d : z_1 + z_2 + \cdots + z_d \ \hbox{is even} \}. $$
 The proof of Lemma 8 in \cite{durrett_1992} gives the existence of a constant $a > 0$ such that
\begin{equation}
\label{eq:dry-3}
  \bar W_n \cap B_2 (0, an) \,\subset \,W_n \quad \hbox{for all $n$ large on the event} \ \{W_n \neq \varnothing \ \hbox{for all} \ n \geq 0 \}.
\end{equation}
 Following the proof of Lemma 11 in \cite{durrett_1992}, and using \eqref{eq:dry-1} and \eqref{eq:dry-3}, we obtain
 $$ \begin{array}{l}
    \lim_{m \to \infty} \ P \,(\hbox{a generalized dry path starting at some $(z, 0)$} \vspace{4pt} \\ \hspace{80pt}
    \hbox{intersects} \ (B_2 (0, na / 2) \times \{n \}) \cap \mathcal H \ \hbox{for some} \ n \geq m) \ = \ 0 \end{array} $$
 on the event $\{W_n \neq \varnothing \ \hbox{for all} \ n \geq 0 \}$.
 Through the coupling with the process \eqref{eq:voter}, we deduce the existence of an in-all-directions expanding region
 containing only good sites, thus the existence of an in-all-directions expanding region which is void of type 1.
\end{proof} \\ \\
 From Lemma \ref{lem:dry}, we deduce that type 2 wins for the process \eqref{eq:voter} therefore the same holds for the
 process \eqref{eq:generator-2} according to Lemma \ref{lem:coupling}, which completes the proof of Theorem \ref{th:invasion}.


\section{Selfish-selfish interactions}
\label{sec:pure-birth}

\indent This section is devoted to the proof of Theorem \ref{th:pure-birth} which claims the existence of cases of selfish-selfish
 interactions for which the most selfish type outcompetes the other type whenever starting with infinitely many representatives.
 The key to our proof is to first show that type 2 wins for the process with parameters $a_1 < 1$ and $a_2 = 1$, that we denote
 by $(\bar \eta_t)$, and then use a perturbation argument to extend the result to a neighborhood
 of this parameter point.
 First, we note that, under the assumption $a_2 = 1$, the dynamics of the process \eqref{eq:generator-2} reduce to
\begin{equation}
\label{eq:richardson}
  \begin{array}{rcl} L_{\bar \eta} f (\bar \eta) & = &
   \displaystyle \sum_{x \in \Z^d} \ \ind \,\{f_2 (x, \bar \eta) = 0 \} \ [f (\bar \eta_{x, 1}) - f (\bar \eta)] \\ && \hspace{25pt} + \
   \displaystyle \sum_{x \in \Z^d} \ \frac{(1 - a_1) \,f_2 (x, \bar \eta)}{a_1 \,f_1 (x, \bar \eta) + (1 - a_1) \,f_2 (x, \bar \eta)} \ \
                  [f (\bar \eta_{x, 2}) - f (\bar \eta)]. \end{array}
\end{equation}
 Invasion of type 2 individuals in the particular case when $a_2 = 1$ is established in the following lemma through a comparison
 with the Richardson model introduced in \cite{richardson_1973}.
\begin{lemma} --
\label{lem:richardson}
 Let $\delta > 0$.
 Then, for all $N$ sufficiently large,
 $$ P \,(\bar \eta_{C_7 N} (x) = 1 \ \hbox{for some} \ x \in (- 2N, 2N]^d \ | \ \bar \eta_0 (x) = 2 \ \hbox{for all} \ x \in (- N, N]^d) \ \leq \ \delta $$
 for some constant $C_7 < \infty$ that does not depend on $\delta$.
\end{lemma}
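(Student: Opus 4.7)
The plan is to exploit the fact that, when $a_2 = 1$, a type 2 vertex can only flip to type 1 if \emph{every} one of its nearest neighbors is of type 1, and to combine this with the shape theorem for the Richardson growth model \cite{richardson_1973} to show that type 2 invades the complement of the seed cube at a positive linear speed.

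Write $C_0 = (-N, N]^d \cap \Z^d$ and $A_t = \{x : \bar \eta_t (x) = 2 \}$. The starting observation is that every vertex of $C_0$ remains of type 2 forever: if $\tau > 0$ were the first time some $x \in C_0$ flips, then just before $\tau$ every neighbor of $x$ would be of type 1, but any neighbor $y \in C_0$ of $x$ starts as type 2 and by minimality of $\tau$ has not flipped before $\tau$, a contradiction. I then introduce a growing set $P_t \subset A_t$ by setting $P_0 = C_0$ and adding a vertex $y$ to $P_t$ the first time $y$ becomes type 2 in $\bar \eta$ while having at least one neighbor in $P_t$. A strong induction on the ordered jump times of $P_t$ shows that once $y$ enters $P_t$ it stays in $A_t$ forever: a subsequent flip of $y$ would require the permanent type 2 neighbor in $P_t$ supplied by the induction hypothesis to be type 1, which repeats the first-flip argument above.

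For any $y$ adjacent to $P_t$, one has $f_2 (y, \bar \eta) \geq 1$, so the rate at which $y$ becomes type 2 in $\bar \eta$ is at least
$$ \lambda_0 \ := \ \frac{1 - a_1}{a_1 \,(2d - 1) + (1 - a_1)} \ > \ 0. $$
Assigning an independent Poisson clock of rate $\lambda_0 / (2d)$ to each oriented nearest-neighbor edge produces a Richardson growth model $R_t$ which, through a routine coupling with the graphical representation of $\bar \eta$ and the monotonicity of $P_t$, satisfies $R_t \subset P_t \subset A_t$ when both processes start from the common seed $C_0$. By the shape theorem for the Richardson model \cite{richardson_1973}, there is a convex set $K$ of positive inradius such that $t^{-1} R_t \to K$ almost surely, so one can pick $C_7 < \infty$ depending only on $K$ and $d$ (hence only on $a_1$ and $d$) with $P (R_{C_7 N} \not \supset (- 2N, 2N]^d) \to 0$ as $N \to \infty$. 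The lemma follows from $R_t \subset A_t$.

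The main difficulty is that $A_t$ is \emph{not} monotone, because isolated type 2 vertices may flip back to type 1, so one cannot directly compare $A_t$ with a growth process; the construction of the monotone subprocess $P_t$, trapped inside $A_t$ by the first-flip argument, is the device that circumvents this issue and allows the Richardson comparison.
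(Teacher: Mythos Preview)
Your argument is correct and follows essentially the same route as the paper: both proofs rest on the observation that, when $a_2 = 1$, a type 2 vertex with at least one type 2 neighbor can never flip, so the connected type 2 cluster seeded by $(-N,N]^d$ is monotone and dominates a Richardson model with birth rate $\ep = (1 - a_1)/(a_1(2d-1) + (1-a_1))$, after which the Shape Theorem finishes the job. Your intermediate process $P_t$ is just an explicit construction of what the paper obtains by noting that connected type 2 components with at least two vertices are invariant; the paper's formulation is slightly quicker, while yours has the minor advantage of working directly without first reducing (via attractiveness) to the minimal initial condition $A_0 = C_0$.
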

\begin{proof}
 First, we observe that, given that vertex $x$ has at least one neighbor of type 2 at the time it is updated, it flips from type 1
 to type 2 with probability
 $$ \frac{(1 - a_1) \,f_2 (x, \bar \eta)}{a_1 \,f_1 (x, \bar \eta) + (1 - a_1) \,f_2 (x, \bar \eta)} \ \geq \ \ep \ := \
    \frac{1 - a_1}{a_1 \,(2d - 1) + (1 - a_1)} \ > \ 0 $$
 while it flips from type 2 to type 1 with probability zero.
 This implies that, for every connected component $A \subset \Z^d$ containing at least two vertices, if all vertices in $A$ are of
 type 2 at some time, then this remains true at any later time.
 In particular, the set of type 2 individuals dominates stochastically the set of occupied sites in the Richardson model $(\xi_t)$,
 i.e., the contact process with no death, with birth parameter $\ep \,(2d)^{-1} > 0$ under certain initial conditions.
 More precisely, both processes can be constructed from the same graphical representation in such a way that
 $$ P \,(\xi_t \subset \{x : \bar \eta_t (x) = 2 \} \ \hbox{for all} \ t \geq 0 \ | \ \xi_0 \subset \{x : \bar \eta_0 (x) = 2 \}) \ = \ 1 $$
 whenever the initial set of type 2 individuals is connected and has at least two vertices.
 This, together with the Shape Theorem for the Richardson model \cite{richardson_1973}, implies that there exists a finite constant
 $C_7 < \infty$ that only depends on the parameter $\ep$ such that, for all $\delta > 0$,
 $$ \begin{array}{l}
     P \,(\bar \eta_{C_7 N} (x) = 1 \ \hbox{for some} \ x \in (- 2N, 2N]^d \ | \ \bar \eta_0 (x) = 2 \ \hbox{for all} \ x \in (- N, N]^d) \vspace{4pt} \\ \hspace{85pt} \leq \
     P \,(\xi_{C_7 N} \not \supset (- 2N, 2N]^d \ | \ \xi_0 = (- N, N]^d) \ \leq \ \delta \end{array} $$
 for all $N$ sufficiently large.
 This completes the proof.
\end{proof} \\ \\
 Extinction of type 1 individuals stated in Theorem \ref{th:pure-birth} follows from Lemmas \ref{lem:dry} and \ref{lem:richardson}
 through the comparison of the process properly rescaled in space and time with a certain oriented site percolation process in which
 dry sites do not percolate.
 Let $T = C_7 N$ where $C_7 < \infty$ is the constant fixed in Lemma \ref{lem:richardson}, and declare $(z, n) \in \mathcal H$ to be a
 good site whenever the event
 $$ \bar \Omega (z, n) \ = \ \{\bar \eta_s (x) = 2 \ \hbox{for all} \ s \in [nT, (n + 1) T] \ \hbox{and all} \ x \in N z + (- N, N]^d \cap \Z^d \} $$
 occurs.
 In the particular case when $a_1 < a_2 = 1$, individuals of type 2 with at least one neighbor of type 2 cannot change, therefore it
 follows from Lemma \ref{lem:richardson} that, for all $\delta > 0$,
 $$ P \,((z \pm e_i, n + 1) \ \hbox{is good for all} \ i = 1, 2, \ldots, d \ | \ (z, n) \ \hbox{is good}) \ \geq \ 1 - \delta $$
 for all $N$ large.
 Theorem 4.3 in Durrett \cite{durrett_1995} again gives the existence of a coupling such that the set of good sites contains the
 set of wet sites of a certain oriented site percolation process with finite range of dependency and parameter $1 - \delta$.
 To complete the proof of Theorem \ref{th:pure-birth}, we show extinction of individuals of type 1 when $a_2 < 1$ is close enough to 1
 relying on a perturbation argument.
\begin{lemma} --
\label{lem:perturbation}
 Let $a_1 < 1$.
 Then, there exists $\rho > 0$ small such that
 $$ \lim_{t \to \infty} \ P \,(\eta_t (x) = 1) \ = \ 0 \quad \hbox{for all} \ x \in \Z^d \quad \hbox{whenever} \ a_2 > 1 - \rho. $$
\end{lemma}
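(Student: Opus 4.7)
The plan is to carry out a perturbation argument around the parameter point $a_2 = 1$ using the block construction already set up for $\bar\eta_t$. Recall that in the proof of Theorem \ref{th:pure-birth} so far, for the process $\bar\eta_t$ with $a_1 < a_2 = 1$ we have shown that, for any $\delta > 0$, one can choose $N$ large so that the good events $\bar\Omega(z,n)$ dominate a supercritical oriented percolation with parameter $1 - \delta$. The goal is to show that the same block construction remains valid for $\eta_t$ whenever $a_2$ is sufficiently close to $1$, after which Lemma \ref{lem:dry} applies verbatim and yields the extinction of type $1$.

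The key point is that $\bar\Omega(z, n)$ is measurable with respect to the graphical representation inside a finite space-time box $B := (Nz + (-4N, 4N]^d) \times [nT, (n+1)T]$, and that the transition probabilities \eqref{eq:proba} are continuous in the parameters (this is exactly where the convention fixed in the two degenerate cases is used). First I would couple $\eta_t$ and $\bar\eta_t$ on the same probability space by enriching the graphical representation: at each clock ring at a vertex $x$, attach an independent uniform $[0,1]$ random variable $U$, and let both processes choose their new type by testing $U$ against the two transition probabilities given by \eqref{eq:proba} with parameters $(a_1, a_2)$ and $(a_1, 1)$, respectively. The probability that the two processes disagree at a given update is then bounded by the supremum of the absolute difference of these probabilities over all admissible neighborhood configurations, which tends to $0$ as $a_2 \to 1$ uniformly in the local configuration, say by at most some $h(\rho) \to 0$ as $\rho \to 0$.

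Next, for a fixed $N$ (chosen large enough to guarantee supercritical percolation for $\bar\eta_t$), the total number of clock rings in the finite box $B$ is Poisson with expectation of order $N^{d+1}$, hence bounded with high probability by a deterministic constant $K = K(N)$. On the event that at most $K$ updates occur in $B$ and that no disagreement happens at any of them, the two processes coincide on $B$, so $\bar\Omega(z, n)$ occurs for $\eta_t$ as soon as it occurs for $\bar\eta_t$. A union bound gives
$$ P(\bar\Omega(z,n) \text{ fails for } \eta_t) \ \leq \ P(\bar\Omega(z,n) \text{ fails for } \bar\eta_t) \ + \ K \, h(\rho) \ + \ P(\text{more than $K$ rings in $B$}). $$
Each of the three terms can be made smaller than $\delta$: the first by the earlier Richardson-based argument, the third by Poisson concentration, and then the second by choosing $\rho$ small enough with $N$ already fixed.

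This establishes that, for $\rho$ sufficiently small, the good sites for $\eta_t$ still contain the wet sites of an oriented site percolation process with parameter $1 - 3\delta$ and finite range of dependency. Because the good event $\bar\Omega(z,n)$ for $\eta_t$ also forces all of $Nz + (-N,N]^d$ to be of type $2$ throughout the time slab $[nT, (n+1)T]$, the presence of a type $1$ individual at $x \in Nz + (-N,N]^d$ at time $nT$ still entails the existence of a generalized dry path from some $(z_0, 0)$ to $(z, n)$, and the contour argument of Lemma \ref{lem:dry} transfers unchanged. The main obstacle is really the localization and continuity argument of the middle paragraph: one must verify that the perturbation is small enough not to ruin the block estimates obtained for $\bar\eta_t$, and this is why the continuity of the transition probabilities at the degenerate points, emphasized in the model description, is indispensable.
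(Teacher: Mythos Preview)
Your proposal is correct and follows essentially the same route as the paper: fix the scale $N$ from the Richardson-based block construction for $\bar\eta_t$, use continuity of the transition probabilities in the parameters to couple $\eta_t$ with $\bar\eta_t$ inside a single space-time block so that the good event for $\eta_t$ still has probability at least $1-\gamma$, and then invoke Lemma~\ref{lem:dry}. Your version is in fact more explicit than the paper's about the coupling mechanism (shared uniform variables, Poisson bound on the number of rings, union bound), whereas the paper simply asserts the existence of a coupling satisfying \eqref{eq:perturbation-2} from continuity and then combines it with \eqref{eq:perturbation-1}.
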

\begin{proof}
 First, we fix $\gamma > 0$ such that \eqref{eq:dry-1} in Lemma \ref{lem:dry} holds, and set $\delta = \gamma / 2$.
 Then, we apply Lemma \ref{lem:richardson} to obtain the existence of a scaling parameter $N$ such that
\begin{equation}
\label{eq:perturbation-1}
 \begin{array}{l}
   P \,(\bar \eta_s (x) = 1 \ \hbox{for some} \ (x, s) \in (- 2N, 2N]^d \times [T, 2T] \ | \vspace{4pt} \\ \hspace{80pt}
        \bar \eta_s (x) = 2 \ \hbox{for all} \ (x, s) \in (- N, N]^d \times [0, T]) \ \leq \ \delta.
 \end{array}
\end{equation}
 The scaling parameter being fixed, since the transition rates are continuous with respect to the parameters, there exist a
 small $\rho = \rho (N) > 0$ and a coupling of \eqref{eq:generator-2} and \eqref{eq:richardson} such that
\begin{equation}
\label{eq:perturbation-2}
\begin{array}{l}
  P \,(\eta_s (x) \neq \bar \eta_s (x) \ \hbox{for some} \ (x, s) \in (- 2N, 2N]^d \times [T, 2T] \ | \vspace{4pt} \\ \hspace{80pt}
       \eta_s (x) = \bar \eta_s (x) = 2 \ \hbox{for all} \ (x, s) \in (- N, N]^d \times [0, T]) \ \leq \ \delta
\end{array}
\end{equation}
 whenever $a_2 > 1 - \rho$.
 By combining \eqref{eq:perturbation-1} and \eqref{eq:perturbation-2}, we obtain
 $$ \begin{array}{l}
     P \,(\eta_s (x) = 1 \ \hbox{for some} \ (x, s) \in (- 2N, 2N]^d \times [T, 2T] \ | \vspace{4pt} \\ \hspace{80pt}
          \eta_s (x) = 2 \ \hbox{for all} \ (x, s) \in (- N, N]^d \times [0, T]) \ \leq \ 2 \delta \ = \ \gamma \end{array} $$
 whenever $a_2 > 1 - \rho$.
 Hence, a new application of Theorem 4.3 in Durrett \cite{durrett_1995} gives the existence of a coupling such that the set of good
 sites contains the set of wet sites of an oriented site percolation process with parameter $1 - \gamma$ when $a_2 > 1 - \rho$.
 Having such a coupling, extinction of type 1 individuals directly follows as previously from the proof of Lemma \ref{lem:dry}.
\end{proof}


\section{Strong altruistic-altruistic interactions}
\label{sec:coexistence}

\indent This section is devoted to the proof of Theorem \ref{th:coexistence} which indicates that, in contrast
 with the one-dimensional nearest neighbor model, coexistence occurs in any other spatial dimensions and for any other
 range of interactions provided cooperation is strong enough.
 The first step of the proof is to show that, excluding the one-dimensional nearest neighbor case, the critical value of the
 threshold contact process is strictly smaller than one, and then use a standard coupling argument to deduce the existence of a
 small constant $\rho > 0$ such that type 1 individuals survive for the process with parameters $a_1 = 0$ and $a_2 = \rho$.
 The coexistence result will then follow from symmetry and the monotonicity of the survival probabilities with respect to
 the parameters.
 Recall that the threshold contact process with parameter $\alpha$ is the spin system $(\xi_t)$ in which each occupied
 vertex becomes empty at rate one, and each empty vertex with at least one occupied neighbor becomes occupied at rate $\alpha$.
 Formally, the dynamics are described by the Markov generator
 $$ L_{\xi} f (\xi) \ = \ \sum_{x \in \Z^d} \ \alpha \ \ind \{f_1 (x, \xi) \neq 0 \} \ [f (\xi_{x, 1}) - f (\xi)]
                   \ + \ \sum_{x \in \Z^d} \ [f (\xi_{x, 0}) - f (\xi)] $$
 where empty and occupied vertices are identified to states zero and one, respectively.
 The reason for looking at the threshold contact process is that, when the birth rate $\alpha = 1$, it is closely
 related to the process \eqref{eq:generator-2} with parameters $a_1 = a_2 = 0$.
\begin{lemma} --
\label{lem:threshold}
 Assume that $(M, d) \neq (1, 1)$. Then,
 $$ \alpha_c \ = \ \inf \,\{\alpha : P \,(\xi_t \neq \varnothing \ \hbox{for all} \ t \geq 0 \ | \ \xi_0 = \{0 \}) > 0 \} \ < \ 1. $$
\end{lemma}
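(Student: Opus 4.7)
The plan is to prove $\alpha_c<1$ by a block construction, identifying for some $\alpha<1$ close to 1 a coupling of the threshold contact process with a supercritical oriented site percolation process, along the same lines as the arguments in Sections~3 and~4 of the paper.

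Fix a large scale parameter $N$ and a time horizon $T$ proportional to $N$. I would declare $(z,n) \in \mathcal H$ to be a good site whenever some locally dense configuration of occupied sites holds in the block $Nz + (-N,N]^d$ at time $nT$---for instance, a specified sub-block of side $N/2$ is fully occupied. The core of the argument is to show that if $(z,n)$ is good, then the sites $(z\pm e_i, n+1)$ are good with probability arbitrarily close to 1 when $\alpha$ is close to 1 and $N$ is large.

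To establish this propagation, I would argue in two parts. For the \emph{spread} of the occupied region, since at $\alpha$ close to 1 every empty site adjacent to an occupied site acquires type 1 at rate close to 1, the boundary dynamics can be dominated below by a Richardson-type growth process, exactly as in Lemma~\ref{lem:richardson}; the Shape Theorem then produces linear outward growth, so a larger block is reached by time $T = O(N)$. For the \emph{persistence} of the occupied density inside the block, I would show by a Peierls-type estimate that the probability of a connected empty region of size $k$ forming within time $T$ is exponentially small in $k$; here $(M,d) \neq (1,1)$ enters crucially, because each site then has at least four neighbors, so for an empty pocket to grow a whole shell of neighbors must die without being reborn, an event whose probability decays exponentially with the size of the shell.

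The main obstacle is this persistence estimate, since at $\alpha = 1$ birth and death rates balance and there is no monotone growth to exploit. I would address it by a direct analysis of the graphical representation: for each potential empty region, bound the probability that all its sites remain empty throughout $[0,T]$ while being surrounded by occupied sites; the multi-neighbor structure ensures that keeping a site empty requires many independent rate-1 events to conspire, yielding the required exponential decay. Once propagation of the good event is established at $\alpha < 1$ close to 1 (using a perturbation of the $\alpha=1$ estimates exactly as in Lemma~\ref{lem:perturbation}), Theorem~4.3 in Durrett~\cite{durrett_1995} gives the comparison with supercritical oriented site percolation, and hence $\alpha_c < 1$.
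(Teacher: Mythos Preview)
Your approach is genuinely different from the paper's, which does not carry out a block construction at all: it simply combines two cited results. First, attractiveness plus Bezuidenhout--Gray \cite{bezuidenhout_gray_1994} shows that the set of $\alpha$ for which the threshold contact process survives is open. Second, Lemma~2.1 of Handjani \cite{handjani_1999}, resting on Liggett \cite{liggett_1994}, gives survival at $\alpha=1$ whenever $(M,d)\neq(1,1)$. Together these yield $1\in(\alpha_c,\infty)$, hence $\alpha_c<1$, in a few lines.

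Your proposal, by contrast, tries to build the block estimates from scratch, and this is where the gap lies. The persistence step is not just the ``main obstacle'' --- it is essentially the entire content of the theorem, and your sketch does not handle it. Your heuristic that ``for an empty pocket to grow a whole shell of neighbors must die without being reborn'' is incorrect: an empty region enlarges by having \emph{individual} occupied boundary sites die one at a time, each at rate~1, while each empty boundary site is reborn at rate $\alpha$. At $\alpha=1$ these rates match, so the boundary of an empty cluster behaves like a fluctuating interface, and there is no exponential cost per unit of growth that a naive Peierls count can exploit. Distinguishing the case $(M,d)\neq(1,1)$ from the genuinely critical case $(M,d)=(1,1)$ requires a more delicate argument exploiting the neighborhood geometry; this is exactly what Liggett \cite{liggett_1994} supplies (via a comparison with the threshold voter model and its cancellative duality), and it is not a routine contour estimate. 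Similarly, the Richardson comparison for spread does not work as stated, since occupied sites die at rate~1 and the occupied set is not monotone; any lower bound by a pure growth process would itself need the persistence estimate as input. In short, your outline reduces the lemma to a statement of equivalent difficulty rather than proving it.
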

\begin{proof}
 The rate at which a vertex becomes empty is constant while the rate at which a vertex becomes occupied is a nondecreasing
 function of the configuration, where the partial order on the set of configurations is the inclusion.
 This indicates that the threshold contact process is an attractive spin system.
 In particular, Theorem 2.4 in Bezuidenhout and Gray \cite{bezuidenhout_gray_1994} implies that
\begin{equation}
\label{eq:threshold-1}
 \{\alpha : P \,(\xi_t \neq \varnothing \ \hbox{for all} \ t \geq 0 \ | \ \xi_0 = \{0 \}) > 0 \} \ \hbox{is an open interval.}
\end{equation}
 In addition, Lemma 2.1 in \cite{handjani_1999}, which relies on a result of Liggett \cite{liggett_1994}, implies that
\begin{equation}
\label{eq:threshold-2}
 P \,(\xi_t \neq \varnothing \ \hbox{for all} \ t \geq 0 \ | \ \xi_0 = \{0 \}) > 0 \quad \hbox{for} \ (M, d) \neq (1, 1) \ \hbox{and} \ \alpha \geq 1.
\end{equation}
 Combining \eqref{eq:threshold-1} and \eqref{eq:threshold-2} gives $1 \in (\alpha_c, \infty)$ therefore $\alpha_c < 1$.
\end{proof} \\ \\
 Relying on Lemma \ref{lem:threshold}, we now prove the existence of $\rho > 0$ small such that type 1 individuals survive
 for the process with parameters $a_1 = 0$ and $a_2 = \rho$, that we denote by $(\bar \eta_t)$.
 Note that, for this choice of the parameters, the dynamics reduce to
\begin{equation}
\label{eq:coexist}
  \begin{array}{rcl} L_{\bar \eta} f (\bar \eta) & = &
  \displaystyle \sum_{x \in \Z^d} \ \frac{(1 - \rho) \,f_1 (x, \bar \eta)}{(1 - \rho) \,f_1 (x, \bar \eta) + \rho \,f_2 (x, \bar \eta)} \ \
                 [f (\bar \eta_{x, 1}) - f (\bar \eta)] \vspace{4pt} \\ && \hspace{80pt} + \
  \displaystyle \sum_{x \in \Z^d} \ \ind \{f_2 (x, \bar \eta) \neq 0 \} \ [f (\bar \eta_{x, 2}) - f (\bar \eta)]. \end{array}
\end{equation}
\begin{lemma} --
\label{lem:coexist}
 For all $(M, d) \neq (1, 1)$, there is $\rho > 0$ such that
 $$ \liminf_{t \to \infty} \ P \,(\bar \eta_t (x) = 1) \ > \ 0 \quad \hbox{for all} \ x \in \Z^d. $$
\end{lemma}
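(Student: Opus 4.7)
The plan is to reduce the survival of type $1$ in the process $(\bar\eta_t)$ to the survival of a threshold contact process with parameter $\alpha < 1$, which exists by Lemma \ref{lem:threshold} when $(M, d) \neq (1, 1)$. Inspecting the generator \eqref{eq:coexist}, a type $1$ vertex $x$ becomes type $2$ at rate $\ind\{f_2(x, \bar\eta) \neq 0\} \leq 1$, while a type $2$ vertex with at least one type $1$ neighbor becomes type $1$ at rate
$$ \frac{(1 - \rho) \,f_1(x, \bar\eta)}{(1 - \rho) \,f_1(x, \bar\eta) + \rho \,f_2(x, \bar\eta)} \;\geq\; \frac{1 - \rho}{(1 - \rho) + \rho (K - 1)}, $$
where $K = \card\{y : y \sim x\}$ is the size of the interaction neighborhood. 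The right-hand side tends to $1$ as $\rho \downarrow 0$, so for any prescribed $\alpha \in (0, 1)$ we can fix $\rho > 0$ small enough that this birth rate is at least $\alpha$ whenever $f_1(x, \bar\eta) \geq 1$.

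Next, fix $\alpha \in (\alpha_c, 1)$, which is possible by Lemma \ref{lem:threshold}, and choose $\rho$ as above. The second step is to construct a coupling between $\bar\eta$ and a threshold contact process $(\xi_t)$ with parameter $\alpha$, started from $\xi_0 \subseteq A_0 := \{x : \bar\eta_0(x) = 1\}$, in such a way that the inclusion $\xi_t \subseteq A_t := \{x : \bar\eta_t(x) = 1\}$ is preserved at all times. Whenever this inclusion holds, one has $f_1(x, \xi_t) \leq f_1(x, \bar\eta_t)$ at every vertex $x$, so by the rate estimates of the previous paragraph the rate at which an empty vertex $x$ enters $A_t$ is at least the rate at which it enters $\xi_t$, while the rate at which an occupied vertex $x$ leaves $A_t$ is at most the rate at which it leaves $\xi_t$. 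These two inequalities are precisely the conditions under which a standard comparison for attractive spin systems (cf.\ Liggett \cite{liggett_1985}, Chapter III) delivers the desired monotone coupling on a common graphical representation, realized by placing the death and birth clocks of $\xi$ as thinnings of the rate-one update clocks of $\bar\eta$, with auxiliary uniform marks used to choose which of the two processes actually flips at each event.

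Once the coupling is in place, the conclusion is immediate. For any initial configuration with infinitely many type $1$ vertices, we can initialize $\xi_0$ as a translation-invariant subset of $A_0$ of positive density; since $\alpha > \alpha_c$, the attractive threshold contact process satisfies $\liminf_{t \to \infty} P(\xi_t(x) = 1) > 0$ for every $x \in \Z^d$, and by the coupling the same lower bound transfers to $P(\bar\eta_t(x) = 1)$. The main obstacle is the construction of the coupling itself, which is delicate because $\bar\eta$ has a single rate-$1$ update clock with configuration-dependent transition probabilities, whereas the threshold contact process has separate death and birth clocks with fixed rates; verifying that the set-valued inclusion $\xi_t \subseteq A_t$ is monotone event-by-event comes down exactly to the two rate inequalities derived in the first paragraph.
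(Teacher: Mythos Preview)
Your reduction to the threshold contact process via the rate inequalities is exactly the paper's argument, and the coupling you describe is the standard one the paper also uses. The gap is in your last paragraph. You write that ``for any initial configuration with infinitely many type $1$ vertices, we can initialize $\xi_0$ as a translation-invariant subset of $A_0$ of positive density''; this is simply false for a general infinite set $A_0$ (take for instance $A_0 = \{(n,0,\ldots,0) : n \geq 0\}$, which has no translation-invariant subset of positive density). Moreover, even granting such a $\xi_0$, the assertion that $\alpha > \alpha_c$ alone yields $\liminf_{t\to\infty} P(\xi_t(x)=1)>0$ for a fixed vertex $x$ is not immediate: supercriticality gives survival of the set-valued process with positive probability, not automatically a uniform positive density at every site for all large times.

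The paper closes this gap by invoking the complete convergence theorem for the threshold contact process due to Cox and Durrett \cite{cox_durrett_1991}: starting from \emph{any} configuration with infinitely many occupied vertices, the process converges in distribution to the upper invariant measure, which is nontrivial when $\alpha>\alpha_c$. That single citation handles arbitrary infinite initial conditions in one stroke and is the missing ingredient in your argument.
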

\begin{proof}
 Thinking of vertices of type 1 as occupied by a particle and those of type 2 as empty, the expression of \eqref{eq:coexist}
 indicates that particles die at rate at most one, while empty vertices with at least one occupied neighbor become occupied
 at rate at least
 $$ \alpha (\rho) \ = \ \frac{1 - \rho}{1 - \rho + \rho \,((2M + 1)^d - 1)} \ = \ \frac{1 - \rho}{1 + ((2M + 1)^d - 2) \rho} \ > \ \alpha_c $$
 provided $\rho > 0$ is small enough since $\alpha_c < 1$ according to Lemma \ref{lem:threshold}.
 Therefore, for $\rho > 0$ small such that the previous inequality holds, the set of type 1 vertices dominates stochastically
 the set of occupied sites of a supercritical threshold contact process.
 Hence, it suffices to prove the result for the latter: starting with infinitely many occupied vertices,
 $$ \liminf_{t \to \infty} \ P \,(\xi_t (x) = 1) \ > \ 0 \quad \hbox{for all} \ x \in \Z^d \ \hbox{and all} \ \alpha > \alpha_c. $$
 This is a direct consequence of the complete convergence theorem for the threshold contact process established in Cox
 and Durrett \cite{cox_durrett_1991} which implies that, starting with infinitely many occupied vertices, the process converges
 in distribution to the limit obtained when starting with all vertices occupied, which is a nontrivial translation invariant
 measure when $\alpha > \alpha_c$.
\end{proof} \\ \\
 To deduce Theorem \ref{th:coexistence} from Lemma \ref{lem:coexist}, we first invoke the monotonicity of the survival
 probabilities with respect to the parameters to obtain that, for $\rho > 0$ as in the lemma, type 1 survives when the
 parameters are $a_1 = a_2 = \rho$.
 By symmetry, individuals of type 2 survive as well, and so both types coexist, for these parameters.
 Finally, invoking again the monotonicity of the survival probabilities, we deduce that coexistence occurs for all $a_1, a_2 < \rho$.


\section{Strong altruistic-weak altruistic interactions}
\label{sec:reduction}

\begin{figure}[t]
\centering
\scalebox{0.36}{\input{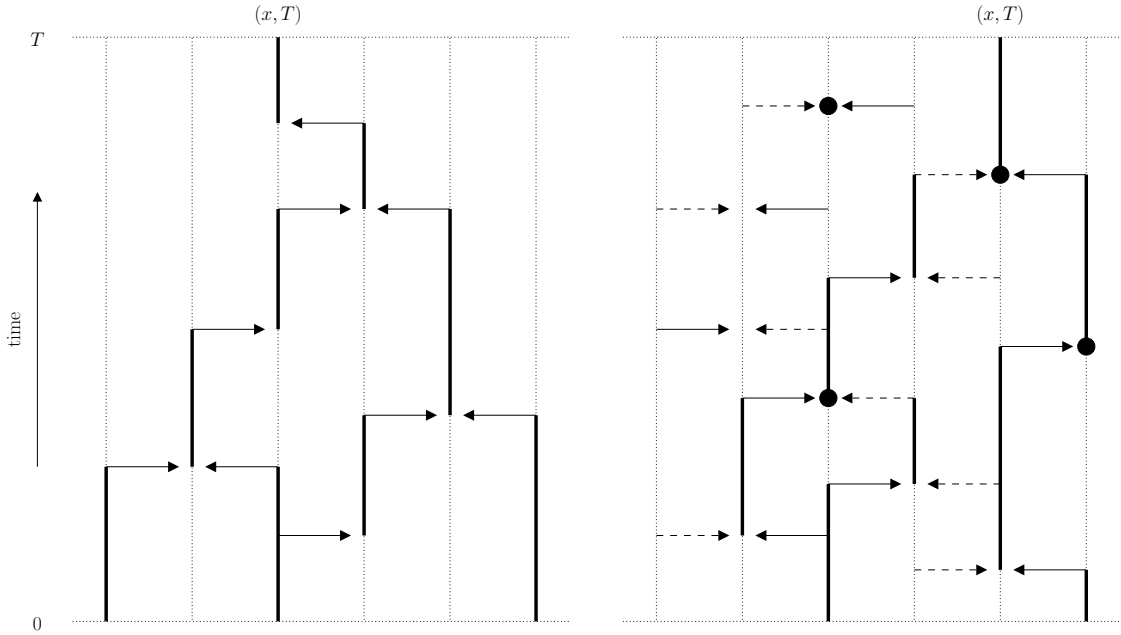}}
\caption{\upshape{Pictures related to the proof of Theorems \ref{th:invasion} and \ref{th:reduction}.}}
\label{fig:dual}
\end{figure}

\indent This section is devoted to the proof of Theorem \ref{th:reduction} which gives the existence of a parameter region
 in which coexistence occurs for the nonspatial model but not the spatial model.
 The proof combines ideas from Sections \ref{sec:invasion} and \ref{sec:pure-birth}.
 First, we study the process with $a_1 = 0$ and $a_2 = 1/2$ using techniques with are reminiscent of duality techniques, though
 the process does not have a dual process at this parameter point.
 This is done through the new concept of breaking point that we introduce in this paper.
 Using breaking points, we construct selected dual paths as in Section \ref{sec:invasion}, and using a block construction, we
 deduce that type 2 wins when $a_1 = 0$ and $a_2 = 1/2$.
 To complete the proof of the theorem, we rely as in Section \ref{sec:pure-birth} on a perturbation argument.


\subsection*{Breaking points.}

\indent First, we assume that $a_1 = 0$ and $a_2 = 1/2$.
 Under this assumption, the type at each vertex $x$ is updated at rate one according to the following rules.
\begin{enumerate}
 \item If vertex $x$ is of type 2 at the time of the update, then the new type is chosen uniformly at random from the set of the
  $2d$ nearest neighbors. \vspace{4pt}
 \item If vertex $x$ is of type 1 at the time of the update, then it switches to type 2 whenever at least one of its nearest
  neighbors is of type 2.
\end{enumerate}
 More formally, the dynamics of the process with $a_1 = 0$ and $a_2 = 1/2$, that we denote by $(\bar \eta_t)$, is
 described by the Markov generator
\begin{equation}
\label{eq:reduction}
  L_{\bar \eta} f (\bar \eta) \ = \
     \sum_{x \in \Z^d} \  f_1 (x, \bar \eta) \ [f (\bar \eta_{x, 1}) - f (\bar \eta)] \ + \
     \sum_{x \in \Z^d} \ \ind \{f_2 (x, \bar \eta) \neq 0 \} \ [f (\bar \eta_{x, 2}) - f (\bar \eta)].
\end{equation}
 We construct the process \eqref{eq:reduction} graphically as follows:
\begin{enumerate}
 \item for each oriented edge $e = (x, y) \in \Z^d \times \Z^d$ with $x \sim y$
 \begin{enumerate}
  \item we let $\Lambda (x, y)$ be a Poisson point process with intensity $(2d)^{-1}$,
  \item we draw at all times $t \in \Lambda (x, y)$ a solid arrow $y \to x$ and an additional $2d - 1$ dashed arrows from the
   other neighbors of $x$ to vertex $x$ to indicate that, if $x$ is of type 2 then it mimics vertex $y$ whereas if it is of type 1
   then it switches to type 2 whenever at least one of its neighbors is of type 2.
 \end{enumerate}
\end{enumerate}
\begin{defin} --
\label{def:breaking}
 We call $(x, t)$ where $t \in \Lambda (x, y)$ a breaking point if
 $$ \sup \,\{\Lambda (y, x) \cap (0, t) \} \ = \ \sup \,\{(\Lambda (x, z) \cup \Lambda (y, z)) \cap (0, t) : z \sim x \ \hbox{or} \ z \sim y \} $$
 and the set $\Lambda (y, x) \cap (0, t)$ is nonempty.
\end{defin}
 In words, $(x, t)$, $t \in \Lambda (x, y)$, is a breaking point if the next solid arrow pointing at either vertex $x$ or vertex $y$ we
 see going down the graphical representation is directed from $x$ to $y$.
 We refer to the right-hand side of Figure \ref{fig:dual} for an example of realization of the graphical representation along with
 the breaking points which are represented with black dots.
 The reason for looking at breaking points is that they allow, as in the proof of Theorem \ref{th:invasion}, to construct
 selected dual paths with a drift towards regions void of type 1 individuals and thus to prove invasion of type 2.
 This good property of the breaking points is expressed in the following lemma.
\begin{lemma} --
\label{lem:breaking}
 Assume that $(x, t)$ is a breaking point. Then
 $$ \bar \eta_t (x) = 1 \ \ \hbox{if and only if} \ \ \bar \eta_t (z) = 1 \ \hbox{for all} \ z \sim x. $$
\end{lemma}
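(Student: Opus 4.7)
The plan is to exploit the graphical construction and the way the breaking point condition pins down the most recent history at $x$ and $y$. Fix a breaking point $(x,t)$ with $t \in \Lambda(x,y)$ and set
$$ s \ := \ \sup \,\{\Lambda(y,x) \cap (0,t) \} \,; $$
by Definition~\ref{def:breaking}, $s > 0$ and no solid arrow points at either $x$ or $y$ during the open interval $(s,t)$, while at time $s$ a solid arrow $x \to y$ fires (an update at $y$ whose solid tail is at $x$). The key consequence is that the types of $x$ and $y$ are constant on $(s,t)$: $\bar\eta_{t^-}(x) = \bar\eta_s(x) = \bar\eta_{s^-}(x)$ and $\bar\eta_{t^-}(y) = \bar\eta_s(y)$.

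I would then split into cases according to $\bar\eta_{t^-}(x)$. Suppose first $\bar\eta_{t^-}(x) = 2$; then the update at time $t$ makes $x$ copy $y$, giving $\bar\eta_t(x) = \bar\eta_{t^-}(y)$. On the other hand, at time $s$, vertex $y$ was updated using a solid arrow from $x$, with $\bar\eta_{s^-}(x) = \bar\eta_{t^-}(x) = 2$: if $\bar\eta_{s^-}(y) = 2$ then $y$ copies $x$ and becomes type $2$; if $\bar\eta_{s^-}(y) = 1$ then $y$ flips to type $2$ because at least one neighbor (namely $x$) is of type $2$. Either way $\bar\eta_s(y) = 2$, and so $\bar\eta_{t^-}(y) = 2$, forcing $\bar\eta_t(x) = 2$. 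Contrapositively, if $\bar\eta_t(x) = 1$ then necessarily $\bar\eta_{t^-}(x) = 1$, and the update rule then forces $\bar\eta_{t^-}(z) = 1$ for every $z \sim x$ (otherwise $x$ would flip to type $2$). Since no vertex other than $x$ is updated at the instant $t$, this gives $\bar\eta_t(z) = 1$ for all $z \sim x$, one direction of the equivalence.

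For the converse direction, assume $\bar\eta_t(z) = 1$ for all $z \sim x$. Then $\bar\eta_{t^-}(y) = \bar\eta_t(y) = 1$, hence $\bar\eta_s(y) = 1$. If $\bar\eta_{t^-}(x) = 1$, then all neighbors of $x$ are of type $1$ at time $t^-$ and $x$ does not flip, so $\bar\eta_t(x) = 1$. If instead $\bar\eta_{t^-}(x) = 2$, then at time $t$ vertex $x$ copies $y$ and $\bar\eta_t(x) = \bar\eta_{t^-}(y) = 1$. In both subcases $\bar\eta_t(x) = 1$, which completes the proof.

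I do not expect a real obstacle here: the whole point of the breaking point definition is to freeze out all other updates at $\{x,y\}$ on the interval $(s,t)$ and guarantee that the most recent interaction between $x$ and $y$ was the $x \to y$ copy at time $s$, which is exactly the information needed to eliminate the ``$\bar\eta_{t^-}(x) = 2$'' possibility in the forward direction. The only care required is to keep straight that the effective rule at an update depends on the updated vertex's type just before the update, and that the rest of the neighborhood of $x$ is unaffected by the time-$t$ jump.
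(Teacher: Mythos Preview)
Your proof is correct and follows essentially the same approach as the paper: use the breaking point structure to freeze the types at $x$ and $y$ on $(s,t)$, show that $\bar\eta_{s^-}(x)=2$ forces $\bar\eta_s(y)=2$ via the time-$s$ update at $y$, and hence $\bar\eta_t(x)=2$, giving the forward direction by contraposition; the converse is the update rule. Your write-up is slightly more explicit in handling the two subcases in the converse, but the argument is the same.
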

\begin{proof}
 Assume that $t \in \Lambda (x, y)$ and let $s = \sup \,\{\Lambda (y, x) \cap (0, t) \}$. Then,
\begin{equation}
\label{eq:breaking}
 \begin{array}{rcl}
  \bar \eta_s (x) = 2 & \Longrightarrow & \bar \eta_s (y) = 2 \ \ \hbox{because there is a solid arrow $x \to y$ at time $s$} \vspace{4pt} \\
                      & \Longrightarrow & \bar \eta_r (y) = 2 \ \ \hbox{for all $r \in (s, t)$} \ \ \hbox{since no arrow points at $\{y \} \times (s, t)$} \vspace{4pt} \\
                      & \Longrightarrow & \bar \eta_t (x) = 2 \ \ \hbox{because there is a solid arrow $y \to x$ at time $t$}. \end{array}
\end{equation}
 From \eqref{eq:breaking}, we deduce that
 $$ \begin{array}{rcl}
    \bar \eta_t (x) = 1 & \Longrightarrow & \bar \eta_t (x) = 1 \ \hbox{and} \ \bar \eta_s (x) = 1 \vspace{4pt} \\
                        & \Longrightarrow & \bar \eta_t (x) = 1 \ \hbox{and} \ \bar \eta_{t-} (x) = 1 \ \ \hbox{because no arrow points at $\{x \} \times (s, t)$} \vspace{4pt} \\
                        & \Longrightarrow & \bar \eta_t (z) = 1 \ \hbox{for all} \ z \sim x \end{array} $$
 where the last implication follows from the evolution rules of the process.
 This proves the necessary condition.
 The sufficient condition is trivial and we point out that it is always true provided an arrow points at $(x, t)$ whereas the reverse
 holds because $(x, t)$ is a breaking point.
\end{proof} \\ \\
\noindent Motivated by Lemma \ref{lem:breaking}, we now construct from the graphical representation a certain system of branching coalescing
 random walks that we shall call a dual process as it allows to obtain information about the type of a given vertex, say $x$, at the
 current time, say $T$, based on the configuration at earlier times.
 To define the dual process starting at $(x, T)$, the first step is to remove from the graphical representation all the dashed arrows that
 point at a space-time point which is not a breaking point.
 Then, we say that there is a dual path from $(x, T)$ down to $(y, T - s)$, which we again write $(x, T) \downarrow (y, T - s)$, whenever
 there are sequences of times and vertices
 $$ s_0 = T - s \ < \ s_1 \ < \ \cdots \ < \ s_{m + 1} = T \qquad \hbox{and} \qquad x_0 = y, \,x_1, \,\ldots, \,x_m = x $$
 such that the following two conditions hold:
\begin{enumerate}
 \item for $i = 1, 2, \ldots, m$, there is an arrow from $x_{i - 1}$ to $x_i$ at time $s_i$ and \vspace{4pt}
 \item for $i = 0, 1, \ldots, m$, there is no arrow that points at the segments $\{x_i \} \times (s_i, s_{i + 1})$.
\end{enumerate}
 We point out that condition 1 above must hold after removal of the dashed arrows that are not directed to a breaking point.
 The dual process starting at $(x, T)$ is the set-valued process
 $$ \hat \eta_s (x, T) \ = \ \{y \in \Z^d : (x, T) \downarrow (y, T - s) \} \quad \hbox{for all} \ 0 \leq s \leq T. $$
 See the right-hand side of Figure \ref{fig:dual} where the dual process is represented in thick lines.
 One of the keys to proving Theorem \ref{th:reduction} is the following duality relationship which is simply obtained by applying
 Lemma \ref{lem:breaking} at each of the breaking points that occurs along the dual process:
\begin{equation}
\label{eq:breaking-1}
 \bar \eta_T (x) = 1 \quad \hbox{only if} \quad \bar \eta_{T - s} (y) = 1 \ \hbox{for all} \ y \in \hat \eta_s (x, T).
\end{equation}


\subsection*{Selected dual paths.}

\indent To bound the probability that space-time point $(x, T)$ is of type 1, we follow the same approach as in
 Section \ref{sec:invasion} and construct a process $(Y_t (x))$ that keeps track of a specific particle in the dual process.
 Recall that
 $$ H_i \ := \ \{z = (z_1, z_2, \ldots, z_d) \in \R^d : z_i = 0 \} \quad \hbox{for} \ i = 1, 2, \ldots, d. $$
 In addition, we let $T_i := i \,c_2 N$ for $i = 0, 1, \ldots, d$, where
 $$ c_2 \ := \ 4 \ep^{-1} \quad \hbox{with} \quad \ep \ := \ 2d \,(1 - e^{- 1/2})^2 \ e^{- (2d - 1)} \ > \ 0 $$
 and where $N$ is a scaling parameter that will be fixed later.
 The choice of $\ep$ will become clear in the proof of Lemma \ref{lem:extend} below.
 The dual path starts at $Y_0 (x) = x$.
 To construct this path at later times assuming that it is defined up to time $t$, we introduce
\begin{equation}
\label{eq:infimum}
  s (t) \ = \ \inf \,\{s > t : T - s \in \Lambda (Y_t (x), y) \ \hbox{for some} \ y \sim Y_t (x) \}.
\end{equation}
 For all $s \in (t, s (t))$, we set $Y_s (x) = Y_t (x)$ while at time $s (t)$ we have the alternative:
\begin{enumerate}
 \item If $(Y_t (x), T - s (t))$ is a breaking point and $t \in (T_{i - 1}, T_i)$ for some $i = 1, 2, \ldots, d$, then
  $$ Y_{s (t)} (x) \sim Y_t (x) \quad \hbox{and} \quad \dist (Y_{s (t)} (x), H_i) < \dist (Y_t (x), H_i). $$
 \item Otherwise, $Y_{s (t)} (x) = y$ is the neighbor for which the infimum in \eqref{eq:infimum} is reached.
\end{enumerate}
 In words, the process goes backwards in time down the graphical representation following the solid arrows from tip to tail except
 when a breaking point is encountered in which case the process crosses the arrow, either dashed or solid, that makes it closer to the
 hyperplane $H_i$ between the times $T_{i - 1}$ and $T_i$.
 After time $T_d$ only the solid arrows are crossed.
 The process is thus embedded in the dual process which, together with the relationship \eqref{eq:breaking-1}, implies that
\begin{equation}
\label{eq:breaking-2}
 \bar \eta_T (x) = 1 \quad \hbox{only if} \quad Y_T (x) = 1.
\end{equation}
 Motivated by condition \eqref{eq:breaking-2}, the next step is to prove the analog of Lemma \ref{lem:target} for this new selected dual path.
 More precisely, we have the following lemma.
\begin{lemma} --
\label{lem:extend}
 Let $x \in (- 2N, 2N]^d$.
 Then, there exists $C_8 < \infty$ such that
 $$ P \,(Y_t (x) \notin (- 4N, 4N]^d \ \hbox{for some} \ t < C_8 N \ \hbox{or} \ Y_{C_8 N} (x) \notin (- N, N]^d) \ \leq \ C_9 \,\exp (- \gamma_9 \sqrt N) $$
 for suitable $C_9 < \infty$ and $\gamma_9 > 0$, and all $N$ sufficiently large.
\end{lemma}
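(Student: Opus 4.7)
The plan is to reproduce the argument that led from Lemmas~\ref{lem:go}--\ref{lem:stay} to Lemma~\ref{lem:target}, with the selected dual path $(Y_t(x))$ built from breaking points now replacing the path $(X_t(x))$ of Section~\ref{sec:invasion}. I shall first prove the analog of Lemma~\ref{lem:go} controlling the drift of $\dist(Y_t(x), H_i)$ during the interval $(T_{i-1}, T_i)$, then the analog of Lemma~\ref{lem:stay} controlling its fluctuations on $(T_i, C_8 N)$, and finally combine them coordinate by coordinate exactly as in the proof of Lemma~\ref{lem:target}.

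The key new ingredient, which has no counterpart in Section~\ref{sec:invasion}, is a positive lower bound on the probability that a given jump of $Y_t(x)$ occurs at a breaking point. Since the process moves only when some $\Lambda(Y_t(x), y)$ fires, it jumps at total rate one. At each such jump, either the event is a breaking point, in which case $Y_t$ is moved to a neighbor that strictly decreases $\dist(Y_t(x), H_i)$ by one, or it is not, in which case $Y_t$ follows the unique available solid arrow to a uniformly random neighbor, contributing zero expected change in $\dist(\cdot, H_i)$. Hence on $(T_{i-1}, T_i)$ the drift of $\dist(Y_t(x), H_i)$ is exactly the negative of the breaking-point probability. By Definition~\ref{def:breaking} and the independence of the Poisson processes $\Lambda(\cdot, \cdot)$ attached to distinct ordered edges, conditioning on an arrow $y \to x$ at time $t$ reduces the breaking-point event to a Poisson-race on the window $[t - 1/2, t]$: one exhibits an event of $\Lambda(y, x)$ in that window while requiring all $2d - 1$ competing rate-one Poisson processes attached to the remaining neighbors of $x$ and $y$ to remain quiet. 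This simple sufficient condition has probability at least $\ep = 2d(1 - e^{-1/2})^2 e^{-(2d-1)}$ as defined in the statement. The resulting drift bound $\leq -\ep$, together with the choice $T_i - T_{i-1} = 4N\ep^{-1}$, gives an expected decrease of at least $4N$ in $\dist(Y_t(x), H_i)$ over the interval, and a large-deviation estimate for Bernoulli sums transforms this into an exponential tail exactly as in Lemma~\ref{lem:go}.

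After $T_i$, the $i$-th coordinate of $Y_t(x)$ is no longer biased: during each subsequent window $(T_{j-1}, T_j)$ with $j > i$, breaking-point jumps shift only the $j$-th coordinate, and after $T_d$ every jump is uniform. Thus $\dist(Y_t(x), H_i)$ is dominated for $t > T_i$ by a rate-one simple symmetric random walk on the nonnegative integers reflected at zero, and the reflection-principle plus exponential Chebyshev argument used in the proof of Lemma~\ref{lem:stay} transfers to give the same $\exp(-\sqrt N)$ tail for excursions exceeding $N$. For $t < T_i$ the same reflected random walk dominates $\dist(Y_t(x), H_i)$ a priori, yielding containment in $(-4N, 4N]^d$ as in \eqref{eq:walk-1}. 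A union bound over $i = 1, \ldots, d$, with $C_8$ chosen larger than $4d\ep^{-1}$, assembles the estimates into the claimed bound $C_9 \exp(-\gamma_9 \sqrt N)$. The main technical obstacle is the lower bound on the breaking-point probability, since this event depends non-Markovianly on the past of the graphical representation in a whole neighborhood of the particle; however, the independence of $\Lambda(\cdot, \cdot)$ across distinct ordered edges makes the Poisson-race computation above uniform in $t$ and produces the explicit $\ep$.
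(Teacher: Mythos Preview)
Your overall strategy---establish the drift analog of \eqref{eq:go-1} for $(Y_t(x))$, then rerun Lemmas~\ref{lem:go}--\ref{lem:target}---matches the paper's, and you correctly identify the non-Markovian dependence of the breaking-point event on the dual-time future as the main obstacle. However, the way you resolve that obstacle has a genuine gap.

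Your Poisson-race argument only bounds the \emph{marginal} probability that a single jump is a breaking point; as you yourself note, this probability is ``uniform in $t$''. But a uniform marginal bound is not enough to turn the drift into an exponential tail via ``a large-deviation estimate for Bernoulli sums'': whether the jump at forward time $t$ is a breaking point depends on the graphical representation in a window $(t',t)$ which overlaps with the corresponding window for the next jump whenever successive jumps are close, and in any case depends on information \emph{ahead} of the dual path in dual time. So the breaking-point indicators at successive jumps are neither independent nor adapted to the natural filtration of $(Y_s(x))$, and neither i.i.d.\ large deviations nor Azuma applies directly. The paper's device is precisely to \emph{discretize}: it defines, for each integer $n$, the event ``$n$ is a breaking time'' purely in terms of the arrows in the forward-time window $(T-n-1,T-n)$ near $Y_n(x)$ (a specific arrow $y\to Y_n(x)$ in the first half, then a specific arrow $Y_n(x)\to y$ in the second half, and nothing else pointing at $Y_n(x)$ or $y$). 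These three conditions force at least one genuine breaking point to occur between dual times $n$ and $n+1$, and---because the windows $(T-n-1,T-n)$ are disjoint---the events ``$n$ is a breaking time'' are independent given positions, with the stated probability $\ep = 2d(1-e^{-1/2})^2 e^{-(2d-1)}$. That is what restores the Bernoulli structure and makes the concentration in Lemma~\ref{lem:go} go through. Your half-interval Poisson-race (with ``$2d-1$ competing rate-one processes'') does not reproduce this value and, more importantly, does not give you independent coin flips. Once you insert the paper's discretization step, the rest of your outline is correct.
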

\begin{proof}
 To begin with, we call $s \in (0, T - 1)$ a breaking time whenever
\begin{enumerate}
 \item there is a unique neighbor $y$ of vertex $Y_s (x)$ such that there is a solid arrow $y \to Y_s (x)$ in the time interval $(T - s, T - s - 1/2)$, \vspace{4pt}
 \item there is a unique solid arrow $Y_s (x) \to y$ in the time interval $(T - s - 1/2, T - s - 1)$, \vspace{4pt}
 \item no other solid arrows point at $Y_s (x)$ or $y$ in the time interval $(T - s, T - s - 1)$.
\end{enumerate}
 Note that these three conditions imply that the dual path crosses exactly one breaking point between dual times $s$ and $s + 1$.
 Moreover, a simple calculation shows that
 $$ P \,(s \ \hbox{is a breaking time}) \ = \ 2d \,(1 - e^{- 1/2})^2 \ e^{- (2d - 1)} \ = \ \ep \ > \ 0. $$
 Since in addition disjoint parts of the graphical representations are independent, it follows that the discrete-time process
 $(Y_n (x) : n \in (0, T) \cap \Z)$ is a Markov chain that satisfies
 $$ \begin{array}{l}
     E \,(\dist (Y_{n + 1} (x), H_i) - \dist (Y_n (x), H_i)) \vspace{4pt} \\ \hspace{50pt} \leq \ \ep \
     E \,(\dist (Y_{n + 1} (x), H_i) - \dist (Y_n (x), H_i) \ | \ n \ \hbox{is a breaking time}) \ \leq \ - \ep \end{array} $$
 for all $n \in (T_{i - 1}, T_i) \cap \Z$.
 This is the analog of \eqref{eq:go-1}.
 In particular, the rest of the proof follows from the exact same arguments as in the proofs of Lemmas \ref{lem:go}-\ref{lem:target}.
\end{proof}


\subsection*{Block construction.}

\indent To complete the proof, we let $T = C_8 N$ and declare site $(z, n) \in \mathcal H$ to be a good site whenever the following
 event occurs:
 $$ \bar \Omega (z, n) \ = \ \{\bar \eta_s (x) = 2 \ \hbox{for all} \ s \in [nT, (n + 1) T] \ \hbox{and all} \ x \in N z + (- N, N]^d \cap \Z^d \}. $$
 Lemma \ref{lem:extend} and \eqref{eq:breaking-2} imply that, for all $\delta > 0$,
 $$ P \,((z \pm e_i, n + 1) \ \hbox{is good for all} \ i = 1, 2, \ldots, d \ | \ (z, n) \ \hbox{is good}) \ \geq \ 1 - \delta $$
 provided the scale parameter $N$ is sufficiently large.
 This, together with the exact same arguments as in the proofs of Lemmas \ref{lem:invasion} and \ref{lem:dry}, implies that type 2
 wins for the process \eqref{eq:reduction}.
 Finally, using the continuity of the transition rates with respect to the parameters and the same approach as in the proof
 of Lemma \ref{lem:perturbation}, we deduce the existence of $\rho > 0$ small such that type 2 wins as well for the
 process \eqref{eq:generator-2} when $a_1 < \rho$ and $a_2 > 1/2 - \rho$.
 This completes the proof of Theorem \ref{th:reduction}.




\end{document}